\documentclass[a4paper,10pt]{amsart}
\usepackage{amsmath,amsthm,amssymb,latexsym,enumerate,color,hyperref}
\usepackage{graphicx}

\numberwithin{equation}{section}

\begin{document}

\newtheorem{thm}{Theorem}[section]
\newtheorem{prop}[thm]{Proposition}
\newtheorem{lem}[thm]{Lemma}
\newtheorem{cor}[thm]{Corollary}
\newtheorem{rem}[thm]{Remark}
\newtheorem*{defn}{Definition}

\newcommand{\DD}{\mathbb{D}}
\newcommand{\NN}{\mathbb{N}}
\newcommand{\ZZ}{\mathbb{Z}}
\newcommand{\QQ}{\mathbb{Q}}
\newcommand{\RR}{\mathbb{R}}
\newcommand{\CC}{\mathbb{C}}
\renewcommand{\SS}{\mathbb{S}}

\renewcommand{\theequation}{\arabic{section}.\arabic{equation}}

\newcommand{\supp}{\mathop{\mathrm{supp}}}    

\newcommand{\re}{\mathop{\mathrm{Re}}}   
\newcommand{\im}{\mathop{\mathrm{Im}}}   
\newcommand{\dist}{\mathop{\mathrm{dist}}}  
\newcommand{\link}{\mathop{\circ\kern-.35em -}}
\newcommand{\spn}{\mathop{\mathrm{span}}}   
\newcommand{\ind}{\mathop{\mathrm{ind}}}   
\newcommand{\rank}{\mathop{\mathrm{rank}}}   
\newcommand{\Fix}{\mathop{\mathrm{Fix}}}   
\newcommand{\codim}{\mathop{\mathrm{codim}}}   
\newcommand{\conv}{\mathop{\mathrm{conv}}}   
\newcommand{\epsi}{\mbox{$\varepsilon$}}
\newcommand{\eps}{\mathchoice{\epsi}{\epsi}
{\mbox{\scriptsize\epsi}}{\mbox{\tiny\epsi}}}
\newcommand{\cl}{\overline}
\newcommand{\pa}{\partial}
\newcommand{\ve}{\varepsilon}
\newcommand{\zi}{\zeta}
\newcommand{\Si}{\Sigma}
\newcommand{\cG}{{\mathcal G}}
\newcommand{\cH}{{\mathcal H}}
\newcommand{\cI}{{\mathcal I}}
\newcommand{\cJ}{{\mathcal J}}
\newcommand{\cK}{{\mathcal K}}
\newcommand{\cL}{{\mathcal L}}
\newcommand{\cN}{{\mathcal N}}
\newcommand{\cR}{{\mathcal R}}
\newcommand{\cS}{{\mathcal S}}
\newcommand{\cT}{{\mathcal T}}
\newcommand{\cU}{{\mathcal U}}
\newcommand{\OM}{\Omega}
\newcommand{\B}{\bullet}
\newcommand{\ol}{\overline}
\newcommand{\ul}{\underline}
\newcommand{\vp}{\varphi}
\newcommand{\AC}{\mathop{\mathrm{AC}}}   
\newcommand{\Lip}{\mathop{\mathrm{Lip}}}   
\newcommand{\es}{\mathop{\mathrm{esssup}}}   
\newcommand{\les}{\mathop{\mathrm{les}}}   
\newcommand{\nid}{\noindent}
\newcommand{\pzr}{\phi^0_R}
\newcommand{\pir}{\phi^\infty_R}
\newcommand{\psr}{\phi^*_R}
\newcommand{\pow}{\frac{N}{N-1}}
\newcommand{\ncl}{\mathop{\mathrm{nc-lim}}}   
\newcommand{\nvl}{\mathop{\mathrm{nv-lim}}}  
\newcommand{\la}{\lambda}
\newcommand{\La}{\Lambda}    
\newcommand{\de}{\delta}    
\newcommand{\fhi}{\varphi} 
\newcommand{\ga}{\gamma}    
\newcommand{\ka}{\kappa}   

\newcommand{\core}{\heartsuit}
\newcommand{\diam}{\mathrm{diam}}

\newcommand{\lan}{\langle}
\newcommand{\ran}{\rangle}
\newcommand{\tr}{\mathop{\mathrm{tr}}}
\newcommand{\diag}{\mathop{\mathrm{diag}}}
\newcommand{\dv}{\mathop{\mathrm{div}}}

\newcommand{\al}{\alpha}
\newcommand{\be}{\beta}
\newcommand{\Om}{\Omega}
\newcommand{\na}{\nabla}

\newcommand{\cC}{\mathcal{C}}
\newcommand{\cM}{\mathcal{M}}
\newcommand{\nr}{\Vert}
\newcommand{\De}{\Delta}
\newcommand{\cX}{\mathcal{X}}
\newcommand{\cP}{\mathcal{P}}
\newcommand{\om}{\omega}
\newcommand{\si}{\sigma}
\newcommand{\te}{\theta}
\newcommand{\Ga}{\Gamma}

\title[Stability for the Soap Bubble Theorem]{On the stability for \\ Alexandrov's Soap Bubble theorem}

\author{Rolando Magnanini} 
\address{Dipartimento di Matematica ed Informatica ``U.~Dini'',
Universit\` a di Firenze, viale Morgagni 67/A, 50134 Firenze, Italy.}
    \email{magnanin@math.unifi.it}
    \urladdr{http://web.math.unifi.it/users/magnanin}

\author{Giorgio Poggesi}
\address{Dipartimento di Matematica ed Informatica ``U.~Dini'',
Universit\` a di Firenze, viale Morgagni 67/A, 50134 Firenze, Italy.}
    \email{giorgio.poggesi@unifi.it}

\dedicatory{To Prof. Shigeru Sakaguchi on the occasion of his $60^{th}$ birthday}

\begin{abstract}
Alexandrov's Soap Bubble theorem dates back to $1958$ and states that a compact embedded hypersurface in $\mathbb{R}^N$ with constant mean curvature must be a sphere. For its proof, A.D. Alexandrov invented his reflection priciple. In $1982$, R. Reilly gave an alternative proof, based on integral identities and inequalities, connected with the torsional rigidity of a bar. 
\par
In this article we study the stability of the spherical symmetry: the question is how much a hypersurface is near to a sphere, when its mean curvature is near to a constant in some norm.
\par
We present a stability estimate that states that a compact hypersurface $\Ga\subset\RR^N$ can be contained in a spherical annulus whose interior and exterior radii, say $\rho_i$ and $\rho_e$, satisfy the inequality 
$$
\rho_e - \rho_i \le C \nr H - H_0 \nr^{\tau_N}_{L^1 (\Ga)},
$$
where $\tau_N=1/2$ if $N=2, 3$, and $\tau_N=1/(N+2)$ if $N\ge 4$. Here, $H$ is the mean curvature of $\Ga$, $H_0$ is some reference constant and $C$ is a constant that depends on some geometrical and spectral parameters associated with $\Ga$.  This estimate improves previous results in the literature under various aspects.
\par
We also present similar estimates for some related overdetermined problems.
\end{abstract}

\keywords{Alexandrov Soap Bubble Theorem, constant mean curvature, torsional creep, stability, quantitative estimates}
    \subjclass{Primary 53A10, 35N25, 35B35; Secondary 35A23}

\maketitle

\raggedbottom

\section{Introduction}
Alexandrov's {\it Soap Bubble Theorem} states that a compact hypersurface, embedded in $\mathbb{R}^N$,  that has constant mean curvature $H$ must be a sphere.
To prove that result, A. D. Alexandrov introduced his {\it reflection principle} (see \cite{Al1},\cite{Al2}), later adapted and refined by J. Serrin into the {\it method of moving planes}, that has turned out to be effective to prove radial symmetry of the solutions of certain overdetermined problems in potential theory (see \cite{Se}).
\par
We now know that the fact that essentially the same method works successfully for both problems is not accidental. To see that, we recall that, in its simplest formulation, the result obtained by Serrin states that the overdetermined boundary value problem 
\begin{eqnarray}
\label{serrin1}
&\De u=N \ \mbox{ in } \ \Om, \quad u=0 \ \mbox{ on } \ \Ga, \\
\label{serrin2}
&u_\nu=R \ \mbox{ on } \ \Ga, 
\end{eqnarray}
admits a solution 
for some positive constant $R$ if and only if $\Om$ is a ball of radius $R$ and $u(x)=(|x|^2-R^2)/2$. Here, $\Om$ denotes a bounded domain in $\RR^N$, $N\ge 2$, with sufficiently smooth boundary $\Ga$ and $u_\nu$ is the outward normal derivative of $u$ on $\Ga$. 
\par 
The connection between \eqref{serrin1}-\eqref{serrin2} and the Soap Bubble problem is hinted by the simple differential identity 
$$
\De u=|\na u|\,\dv\frac{\na u}{| \na u|}+\frac{\lan \na^2u\,\na u, \na u\ran}{|\na u|^2};
$$
here, $\na u$ and $\na^2u$ are the gradient and the hessian matrix of $u$, as standard. If we agree
to still denote by $\nu$ the vector field $\na u/|\na u|$ (that on $\Ga$ coincides with the outward unit normal), the above identity and \eqref{serrin1} inform us that
\begin{equation}
\label{reilly}
u_{\nu\nu}+(N-1)\,H\,u_\nu=N,
\end{equation}
on every {\it non-critical} level surface of $u$, and hence on $\Ga$, since a well known formula states that
the mean curvature $H$ of a regular level surface of $u$ equals
$$
\frac1{N-1}\,\dv\frac{\na u}{|\na u|}.
$$
Based on \eqref{reilly}, R.~C.~Reilly  gave in \cite{Re} an alternative proof of Alexandrov's theorem, that hinges on an assortment of integral inequalities and identities and culminates in showing that the inequality 
\begin{equation}
\label{newton}
(\De u)^2\le N\,|\na^2 u|^2,
\end{equation}
that is a simple consequence of Cauchy-Schwarz inequality, holds pointwise with the equality sign on the whole $\Om$. (In our notations, $|\na^2 u|^2$ is the sum of the squares of the entries of $\na^2 u$.) In fact, equality takes place in \eqref{newton} if and only if $u$ is a quadratic polynomial $q$ of the form
\begin{equation}
\label{quadratic}
q(x)=\frac12\, (|x-z|^2-a),
\end{equation}
for some choice of $z\in\RR^N$ and $a\in\RR$, since \eqref{serrin1} is in force. This fact clearly implies that $\Om$ must be a ball.
\par
The aim of this paper is to study the stability issue for the Soap Bubble Theorem. The question is to ascertain under which conditions the smallness (in some norm) of the deviation of $H$ from being a constant implies the closeness of $\Om$ to a ball. The key ingredient to accomplish that goal is the
following integral identity for the solution of \eqref{serrin1}:
\begin{equation}
\label{fundamental}
\frac1{N-1}\int_{\Om} \left\{ |\na ^2 u|^2-\frac{(\De u)^2}{N}\right\}\,dx = N |\Om| -\int_{\Ga}H\, (u_\nu)^2\,dS_x,
\end{equation}
(see Theorem \ref{th:fundamental-identity} below). We shall refer to the first integrand in \eqref{fundamental} as the {\it Cauchy-Schwarz deficit} for $\na^2 u$.
\par
If $H$ is constant on $\Ga$, from  {\it Minkowski's identity},
\begin{equation}
\label{minkowski}
\int_\Ga H(x)\,\lan x-p,\nu(x)\ran\,dS_x=|\Ga|, \quad p\in\RR^N,
\end{equation}
we find that $H\equiv |\Ga|/N |\Om|$ and hence
the H\"older inequality 
\begin{equation}
\label{holder}
\left(\int_\Ga u_\nu\,dS_x\right)^2\le |\Ga|\,\int_\Ga (u_\nu)^2 dS_x
\end{equation}
yield the non-positivity of the right-hand side of \eqref{fundamental}, that gives the equality sign in
\eqref{newton}, as desired. 
\par
If $H$ is not constant, we can take the mean curvature of a ball as a reference under the form 
$$
H_0=\frac{|\Ga|}{N |\Om|};
$$ 
by applying \eqref{holder} as before, from \eqref{fundamental} we obtain that
\begin{equation}
\label{fundamental-stability}
\frac1{N-1}\int_{\Om} \left\{ |\na ^2 u|^2-\frac{(\De u)^2}{N}\right\}\,dx \le\int_{\Ga}(H_0-H)\, (u_\nu)^2\,dS_x.
\end{equation}
\par
It is interesting to note that \eqref{fundamental-stability} implies the spherical symmetry of $u$ (or $\Om$)
if its right-hand side is non-positive, {\it with no need} to use \eqref{minkowski}, and this certainly holds if $H\ge H_0$. (Of course, if $H$ equals some constant on $\Ga$, then \eqref{minkowski} implies that $H\equiv H_0$ and hence $H\ge H_0$, too.) 
\par
Inequality \eqref{fundamental-stability} can also be rearranged as
\begin{multline*}
\label{fundamental-stability2} 
\frac1{N-1}\int_{\Om} \left\{ |\na ^2 u|^2-\frac{(\De u)^2}{N}\right\}\,dx+ \\ \int_{\Ga}(H_0-H)^-\, (u_\nu)^2\,dS_x \le 
\int_{\Ga}(H_0-H)^+\, (u_\nu)^2\,dS_x
\end{multline*}
(here, we use the positive and negative part functions $(t)^+=\max(t,0)$ and $(t)^-=\max(-t,0)$). That inequality tells us that, if we have an {\it a priori} bound $M$ for
$u_\nu$ on $\Ga$, then its left-hand side is small if the integral
\begin{equation*}
\int_{\Ga}(H_0-H)^+\,dS_x
\end{equation*}
is also small.  In particular, if $H$ is not too much smaller than $H_0$, then it cannot be too much larger than $H_0$ and the Cauchy-Schwarz deficit cannot be too large.  Thus, to achieve our aim, it remains to quantitavely transform this smallness into closeness
of $\Om$ to a ball.
\par
In Theorem \ref{thm:SBT-stability}, we shall prove that, for some point $z\in\Om$ the radius of
the largest ball centered at $z$ and contained in $\Om$
and that of the smallest concentric ball that contains $\Om$, that is
\begin{equation}
\label{def-rhos}
\rho_i=\min_{x\in\Ga}|x-z| \ \mbox{ and } \ \rho_e=\max_{x\in\Ga}|x-z|,
\end{equation}
satisfy the following stability estimate:
\begin{equation}
\label{the-estimate}
\rho_e-\rho_i\le C\,\left\{\int_{\Ga}(H_0-H)^+\,dS_x\right\}^{\tau_N},
\end{equation}
where $\tau_N=1/2$ for $N=2, 3$ and $\tau_N=1/(N+2)$ for $N\ge 4$.
Here, the constant $C$ depends on $N$ and some geomerical and spectral parameters associated with $\Om$ (see Theorem \ref{thm:SBT-stability} for details). 
\par
Inequality \eqref{the-estimate} improves similar estimates given in \cite{CV} and \cite{CM}, under various aspects. In fact,
it replaces the uniform measure $\nr H-H_0\nr_{\infty,\Ga}$ of the deviation from $H_0$, considered in \cite{CV} and \cite{CM}, by a weaker $L^1$-type norm; 
it is thus extended to a larger class of hypersurfaces, being not restricted (as it is in \cite{CM}) to those with positive mean curvature.  Also, it improves the exponent $\tau_N$ obtained in \cite{CM} --- even in this weaker setting and for all $N\ge 2$ --- to the extent that it obtains, for the cases $N=2, 3$, what seems to be the optimal exponent attainable {\it  with this approach.} Finally, even if it does not improve the exponent $\tau_N=1$, obtained in \cite{CV} by means of an adaptation of the reflection principle, it favours the computability of the constant $C$, as shown in \cite{CM} and differently from \cite{CV}.
\par
To prove \eqref{the-estimate}, we consider the function  $h=q-u$; $h$ is harmonic in $\Om$, $h=q$ on $\Ga$,  and we can deduce from
\eqref{fundamental-stability} that
\begin{equation*}
\label{fundamental-harmonic}
\frac1{N-1}\int_{\Om} |\na ^2 h|^2\,dx \le\int_{\Ga}(H_0-H)^+\, (u_\nu)^2\,dS_x.
\end{equation*}
Notice that this inequality holds regardless of the choice of the parameters $z\in\RR^N$ and $a\in\RR$ in \eqref{quadratic}. Thus, to ensure that $z$ is in $\Om$, we choose it as a minimum (or any critical) point of $u$; then,
since $\na h(z)=0$, we show that the oscillation of $h$ on $\Ga$,
$$
\max_\Ga h-\min_\Ga h,
$$
can be  bounded in terms of a power of the quantity
$$
\int_{\Om} |\na ^2 h|^2\,dx.
$$
Since $h$ attains its extrema when $q$ does, then
\begin{equation}
\label{oscillation}
\max_\Ga h-\min_\Ga h=\frac12 (\rho_e^2-\rho_i^2),
\end{equation}
and hence an a priori bound for $u_\nu$ on $\Ga$ and the observation that $\rho_i+\rho_e$ can be bounded from below by the volume of $\Om$ give the desired estimate. 
\par
In Section \ref{sec:SBT}, we shall collect all the relevant identities on which our result is based. To make the presentation self-contained, we will also include a version of Reilly's proof of the Soap Bubble Theorem.  We will also discuss versions of those identities that give radial symmetry for some overdetermined problems associated with \eqref{serrin1}. In particular, we will present a new proof of Serrin's symmetry result that, however, only works if $\Om$ is strictly star-shaped with respect to some origin.
\par
Section \ref{sec:estimates} contains the estimates on harmonic functions and the torsional creep function that are instrumental to derive \eqref{the-estimate}. The key result is Lemma \ref{thm:W22-stability}, in which we are able to bound the difference $\rho_e-\rho_i$ in terms of 
$\nr \na^2 h\nr_{2,\Om}$. Theorem \ref{thm:boundary-gradient} provides a simple bound for the gradient of $u$ on $\Ga$ in terms of the diameter of $\Om$ and the radius of the exterior uniform touching ball. This bound is important, since it allows to treat the general case of $C^{2,\al}$-smooth hypersurfaces, and is obtained by elementary arguments and seems to be new,  generalizing the classical work of Payne and Philippin \cite{PP}, that concerned the case of strictly mean convex domains, in which $H$ is positive at each point in $\Ga$.
\par
Finally, in Section \ref{sec:stability}, we assemble the identities and inequalities proved in the previous sections and establish our stability results. As a corollary of our main inequality contained in Theorem \ref{thm:SBT-stability}, we obtain an estimate of closeness to an aggregate of balls, in the spirit of \cite{CM}. With more or less the same techniques employed for Theorem \ref{thm:SBT-stability}, we also present stability bounds for some of the overdetermined problems considered in Section \ref{sec:SBT}.

\section{Alexandrov's Soap Bubble Theorem}
\label{sec:SBT}

In this section, we will review the details of Reilly's proof, with some modifications, that will enable us to 
derive our stability results. The proof we present is based on the identity \eqref{fundamental}. We shall also show how to use \eqref{fundamental} to obtain other symmetry results, old and new.
\par
The identity \eqref{fundamental} is a consequence of the differential identity for the solution $u$
of \eqref{serrin1},
\begin{equation}
\label{differential-identity}
|\na^2 u|^2-\frac{(\De u)^2}{N}=\De P,
\end{equation}
that associates the Cauchy-Schwarz deficit with the {\it P-function}
\begin{equation}
\label{P-function}
P = \frac{1}{2}\,|\nabla u|^2 - u,
\end{equation}
and is easily obtained by direct computation. Notice that \eqref{differential-identity} also implies
that $P$ is subharmonic, since the left-hand side is non-negative by Cauchy-Schwarz inequality.
\par
In the next theorem, for the sake of completeness, we give the proof of \eqref{fundamental}, that can also be found in \cite{Re}.

\begin{thm}[Fundamental Identity]
\label{th:fundamental-identity}
Let $\Omega \subset \mathbb R^N$ be a bounded domain with boundary $\Ga$ of class $C^{2, \alpha}$ and let $H$ be the mean curvature of $\Ga$.
\par
If $u$ is the solution of  \eqref{serrin1},
then \eqref{fundamental} holds:
\begin{equation*}
\frac1{N-1}\int_{\Om} \left\{ |\na ^2 u|^2-\frac{(\De u)^2}{N}\right\}\,dx = N |\Om| -\int_{\Ga}H\, (u_\nu)^2\,dS_x.
\end{equation*}
\end{thm}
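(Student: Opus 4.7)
The plan is to recognize the left-hand side as the integral of a divergence via the already-stated differential identity \eqref{differential-identity}, namely $|\na^2 u|^2 - (\De u)^2/N = \De P$ with $P = \tfrac12 |\na u|^2 - u$. Thus, by the divergence theorem applied on $\Om$,
\[
\int_{\Om}\left\{|\na^2 u|^2 - \tfrac{(\De u)^2}{N}\right\} dx \;=\; \int_{\Om}\De P\, dx \;=\; \int_{\Ga} P_\nu\, dS_x .
\]
It remains to compute $P_\nu$ on $\Ga$ and to recognize the two pieces $N(N-1)|\Om|$ and $-(N-1)\int_\Ga H u_\nu^2\, dS_x$.

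First I would compute $\na P = \na^2 u\, \na u - \na u$, so that $P_\nu = \langle \na^2 u\, \na u, \nu\rangle - u_\nu$. On $\Ga$ the condition $u\equiv 0$ forces all tangential derivatives of $u$ to vanish, whence $\na u = u_\nu\, \nu$ on $\Ga$. Substituting, I obtain $\langle \na^2 u\, \na u,\nu\rangle = u_\nu\, u_{\nu\nu}$ and therefore $P_\nu = u_\nu\, u_{\nu\nu} - u_\nu$ on $\Ga$.

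Next I would invoke identity \eqref{reilly}, which holds on every non-critical level set of $u$ and hence on $\Ga$ (after verifying that $u_\nu$ does not vanish, which is routine from Hopf's boundary point lemma applied to the subsolution $u$, or simply noted since it is stated in the paper's setup). It yields $u_{\nu\nu} = N - (N-1)H u_\nu$ on $\Ga$, so
\[
P_\nu \;=\; u_\nu\bigl[N - (N-1) H u_\nu\bigr] - u_\nu \;=\; (N-1)\bigl[u_\nu - H (u_\nu)^2\bigr].
\]

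Finally, I would integrate: the first term contributes, via divergence again and $\De u = N$, the quantity $(N-1)\int_\Ga u_\nu\, dS_x = (N-1)\int_\Om \De u\, dx = N(N-1)|\Om|$, while the second gives $-(N-1)\int_\Ga H (u_\nu)^2\, dS_x$. Dividing by $N-1$ yields precisely \eqref{fundamental}. I do not foresee a genuine obstacle; the only slightly delicate step is the careful use of $\na u = u_\nu \nu$ on $\Ga$, which turns the abstract quantity $P_\nu$ into something purely expressible in terms of $u_\nu$, $u_{\nu\nu}$ and $H$, so that Reilly's formula \eqref{reilly} closes the computation.
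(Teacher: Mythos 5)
Your proof is correct and follows essentially the same route as the paper: pass from the Cauchy--Schwarz deficit to $\De P$ via the differential identity, apply the divergence theorem, compute $P_\nu$ on $\Ga$ using $\na u = u_\nu\,\nu$, simplify with Reilly's formula \eqref{reilly}, and finish with $\int_\Ga u_\nu\,dS_x = N|\Om|$. Your remark that Hopf's lemma guarantees $u_\nu \neq 0$ (so that \eqref{reilly} applies on $\Ga$) is a small but welcome extra precision not spelled out in the paper.
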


\begin{proof}
Let $P$ be given by \eqref{P-function}. By the divergence theorem we can write:
\begin{equation}
\label{div-p-function}
\int_{\Om} \De P \,dx = \int_{\Ga} P_\nu \, dS_x.
\end{equation}
To compute $P_\nu$, we observe that $\na u$ is
parallel to $\nu$ on $\Ga$, that is $\na u=(u_\nu)\,\nu$ on $\Ga$. Thus,
$$
P_\nu=\lan D^2 u\, \na u, \nu\ran-u_\nu=u_\nu \lan (D^2 u)\,\nu,\nu\ran-u_\nu=u_{\nu\nu}\, u_\nu-u_\nu.
$$
By Reilly's identity \eqref{reilly}, we know that
$$
u_{\nu\nu}\, u_\nu+(N-1)\,H\, (u_\nu)^2=N\,u_\nu,
$$
and hence
$$
P_\nu=(N-1)\,u_\nu-(N-1)\,H\, (u_\nu)^2
$$
on $\Ga$.
\par
Therefore, \eqref{fundamental} follows from this identity, \eqref{differential-identity}, \eqref{div-p-function} and the formula
\begin{equation}
\label{volume}
\int_\Ga u_\nu\,dS_x=N\,|\Om|,
\end{equation}
that is an easy consequence of the divergence theorem.
\end{proof}

The fundamental identity \eqref{fundamental} can be re-arranged at least into two ways to yield the Soap Bubble Theorem. The former follows the lines of Reilly's proof. The latter gives Alexandrov's theorem via the Heintze-Karcher's inequality \eqref{heintze-karcher} below and we will present it at the end of this section.

\begin{thm}[Soap Bubble Theorem] 
\label{th:SBT}
Let $\Ga\subset\RR^N$ be a surface of class  $C^{2, \alpha}$, which is the boundary of a bounded domain $\Om\subset\RR^N$, and let $u$ be the solution of \eqref{serrin1}.  
Let two positive constants be defined by
\begin{equation}
\label{R and H_0}
R=\frac{N |\Om|}{|\Ga|} \ \mbox{ and } \ H_0=\frac1{R}=\frac{|\Ga|}{N |\Om|}.
\end{equation}
\par
Then, the following identity holds:
\begin{multline}
\label{H-fundamental}
\frac1{N-1}\int_{\Om} \left\{ |\na ^2 u|^2-\frac{(\De u)^2}{N}\right\}dx+
\frac1{R}\,\int_\Ga (u_\nu-R)^2 dS_x = \\
\int_{\Ga}(H_0-H)\, (u_\nu)^2 dS_x.
\end{multline}
\par
Therefore, if the mean curvature $H$ of $\Ga$ satisfies the inequality $H\ge H_0$ on $\Ga$,
then $\Ga$ must be a sphere (and hence $\Omega$ is a ball) of radius $R$. 
\par
In particular, the same conclusion holds if $H$ equals some constant on $\Ga$.

\end{thm}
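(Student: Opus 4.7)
The plan is to obtain the identity \eqref{H-fundamental} by a purely algebraic rearrangement of the fundamental identity \eqref{fundamental} of Theorem \ref{th:fundamental-identity}, and then to exploit the manifest non-negativity of both summands on the left-hand side of \eqref{H-fundamental} to force $u$ to be a quadratic.

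For the identity itself, I would try to rewrite the right-hand side of \eqref{fundamental} in the form
$$
\int_\Gamma (H_0-H)(u_\nu)^2\, dS_x \;-\; \frac{1}{R}\int_\Gamma (u_\nu-R)^2\, dS_x.
$$
Expanding the square $(u_\nu-R)^2=(u_\nu)^2-2R\,u_\nu+R^2$ and using $H_0=1/R$, the quadratic contributions $H_0\int_\Gamma (u_\nu)^2$ and $-\tfrac{1}{R}\int_\Gamma (u_\nu)^2$ cancel exactly. What remains is a linear combination of $\int_\Gamma u_\nu\,dS_x$ and $|\Gamma|$. Substituting $\int_\Gamma u_\nu\,dS_x=N|\Omega|$ from \eqref{volume} and $R|\Gamma|=N|\Omega|$ from \eqref{R and H_0}, one recovers exactly $N|\Omega|-\int_\Gamma H(u_\nu)^2\,dS_x$, which is the right-hand side of \eqref{fundamental}. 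This proves \eqref{H-fundamental}.

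For the rigidity conclusion, under the hypothesis $H\ge H_0$ on $\Gamma$, the right-hand side of \eqref{H-fundamental} is non-positive. On the left-hand side, the Cauchy-Schwarz deficit integrand $|\nabla^2 u|^2-(\Delta u)^2/N$ is pointwise non-negative by \eqref{newton}, and the boundary term $\tfrac{1}{R}\int_\Gamma (u_\nu-R)^2\,dS_x$ is obviously non-negative. Hence both summands vanish. Pointwise equality in \eqref{newton} combined with $\Delta u=N$ forces $\nabla^2 u\equiv I$, so $u$ must be a quadratic of the form \eqref{quadratic} for some $z\in\RR^N$ and $a\in\RR$. The condition $u=0$ on $\Gamma$ then reads $|x-z|^2=a$ on $\Gamma$, so $\Gamma$ is a sphere, and its radius equals $R$ by the very definition of $R$ (or, equivalently, by testing against \eqref{volume}). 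The final assertion, that a merely constant $H$ automatically satisfies $H\equiv H_0$ (and so $H\ge H_0$), will follow by invoking Minkowski's identity \eqref{minkowski}: a constant $H$ factors out of the integral on the left, and the divergence theorem yields $\int_\Gamma \langle x-p,\nu\rangle\,dS_x=N|\Omega|$, so $H\cdot N|\Omega|=|\Gamma|$, i.e.\ $H=H_0$.

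I do not anticipate a serious obstacle. The rewriting that produces \eqref{H-fundamental} is a direct computation and the rigidity step is standard, the crux being the linear-algebra fact that equality in $(\tr A)^2\le N|A|^2$ for a symmetric $N\times N$ matrix $A$ holds if and only if $A$ is proportional to the identity. The only place requiring a bit of care is the bookkeeping of cancellations in the algebraic rearrangement, where the relations $H_0=1/R$, $R|\Gamma|=N|\Omega|$, and $\int_\Gamma u_\nu\,dS_x=N|\Omega|$ must all be used in concert.
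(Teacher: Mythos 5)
Your proposal is correct and follows essentially the same route as the paper: the same algebraic rearrangement of the fundamental identity (using $H_0=1/R$, $R|\Gamma|=N|\Omega|$, and $\int_\Gamma u_\nu\,dS_x=N|\Omega|$ to extract the term $\tfrac{1}{R}\int_\Gamma(u_\nu-R)^2\,dS_x$), the same rigidity argument from equality in the Cauchy–Schwarz inequality forcing $\nabla^2 u\equiv I$, and the same appeal to Minkowski's identity to reduce the constant-$H$ case to $H\ge H_0$.
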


\begin{proof}
Since, by \eqref{volume}, we have that
$$
\frac1{R}\,\int_\Ga u_\nu^2\,dS_x=
\frac1{R}\,\int_\Ga (u_\nu-R)^2\,dS_x+N |\Om|,
$$
then
\begin{multline*}
\int_\Ga H\,(u_\nu)^2\,dS_x=H_0\,\int_\Ga (u_\nu)^2\,dS_x+
\int_\Ga \left( H-H_0\right)\,(u_\nu)^2\,dS_x= \\
\frac1{R}\,\int_\Ga (u_\nu-R)^2\,dS_x+N |\Om|+
\int_\Ga ( H-H_0)\,(u_\nu)^2\,dS_x.
\end{multline*}
Thus, \eqref{H-fundamental} follows from this identity and \eqref{fundamental} at once.
\par
If $H\ge H_0$ on $\Ga$, then the right-hand side in \eqref{H-fundamental} is non-positive and hence both summands at the left-hand side must 
be zero, being non-negative. (Note in passing that this fact implies that the second summand is zero and hence $u_\nu\equiv R$ on $\Ga$, that is $u$ satisfies \eqref{serrin1}-\eqref{serrin2}.)
\par
The fact that also the first summand is zero gives that the Cauchy-Schwarz deficit for the hessian matrix $\na^2 u$ must be identically zero and, being $\De u=N$, that occurs if and only if $\na^2 u$ equals the identity matrix $I$.  Thus, $u$ must be a quadratic polynomial $q$, as in \eqref{quadratic},
for some $z\in\RR^N$ and $a\in\RR$.
\par
Since $u=0$ on $\Ga$, then $|x-z|^2=a$ for $x\in\Ga$, that is $a$ must be positive and
$$
\sqrt{a}\,|\Ga|=\int_\Ga |x-z|\,dS_x=\int_\Ga (x-z)\cdot\nu(x)\,dS_x=N\,|\Om|.
$$
\par
In conclusion, $\Ga$ must be a sphere centered at $z$ with radius $R$.
\par
If $H$ equals some constant, instead, then \eqref{minkowski} tells us that the constant must equal $H_0$, and hence we can apply the previous argument.
\end{proof}

\begin{rem}
{\rm
(i) As pointed out in the previous proof, {\it before showing that $\Om$ is a ball}, we have also proved that, if $H$ is constant, then  
$u$ satisfies \eqref{serrin1}-\eqref{serrin2}. It would be interesting to show that also the converse is true. That would show that the two problems are equivalent.
\par
(ii) We observe that the assumption that $H\ge H_0$ on $\Ga$ implies that $H\equiv H_0$, anyway, if $\Om$ is strictly star-shaped with respect to some origin $p$. In fact, by Minkowski's identity \eqref{minkowski}, we obtain that
$$
0\le\int_\Ga [H(x)-H_0] \lan(x-p), \nu(x)\ran\,dS_x=|\Ga|-H_0 \int_\Ga \lan(x-p), \nu(x)\ran\,dS_x=0,
$$
and we know that $\lan(x-p), \nu(x)\ran>0$ for $x\in\Ga$.
}
\end{rem}

\par
Before presenting the proof of Alexandrov's theorem based on the Heintze-Karcher's inequality, we  prove two symmetry results for overdetermined problems (one of which 
is Serrin's result under some restriction), that have their own interest.
 
\begin{thm}[Two overdetermined problems]
\label{th:torsion}
Let $u\in C^{2,\al}(\ol{\Om})$ be the solution of \eqref{serrin1}.
\par
Then, $\Om$ is a ball if and only if $u$ satisfies one of the following conditions:
\begin{enumerate}[(i)]

\item
$u_\nu(x)=1/H(x)$ for every $x\in\Ga$;

\item
\eqref{serrin2} holds and $\lan(x-p), \nu(x)\ran>0$ for every $x\in\Ga$ and some $p\in\Om$.
\end{enumerate}
\end{thm}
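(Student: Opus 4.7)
The plan is to establish both directions using the tools developed in this section. The ``only if'' direction is immediate: on a ball of radius $R$ centered at $z$, the explicit solution $u(x)=(|x-z|^2-R^2)/2$ satisfies $u_\nu\equiv R$ on $\Ga$ and has mean curvature $H\equiv 1/R$, so both (i) and (ii) hold (the latter with $p=z$).

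For the converse in case (i), I would substitute the hypothesis $u_\nu=1/H$ directly into the Fundamental Identity \eqref{fundamental}. The boundary integrand becomes $H(u_\nu)^2=u_\nu$, so by \eqref{volume} the boundary term equals $N|\Om|$ and the right-hand side of \eqref{fundamental} vanishes. Since the integrand on the left-hand side is pointwise non-negative by Cauchy-Schwarz, it must vanish identically, forcing $\na^2 u = I$ (using $\De u=N$). Hence $u$ is a quadratic of the form \eqref{quadratic}, and $u=0$ on $\Ga$ implies that $\Om$ is a ball.

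For the converse in case (ii), I would use the $P$-function $P=\tfrac12|\na u|^2-u$. By \eqref{differential-identity} and Cauchy-Schwarz, $\De P\ge 0$, so $P$ is subharmonic, and since $|\na u|=u_\nu=R$ on $\Ga$, one has $P\equiv R^2/2$ on $\Ga$ and hence $P\le R^2/2$ in $\Om$. The strong maximum principle splits the argument: either $P\equiv R^2/2$ in $\cl{\Om}$, in which case $\De P\equiv 0$ yields $\na^2 u = I$ and $\Om$ is a ball; or $P$ is nonconstant, and Hopf's boundary point lemma gives $P_\nu>0$ at every $x\in\Ga$. A direct computation using $\na u=R\,\nu$ on $\Ga$ together with \eqref{reilly} yields
$$
P_\nu=(N-1)\,R\,(1-HR),
$$
so $H<H_0=1/R$ strictly on $\Ga$. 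Combining Minkowski's identity \eqref{minkowski} with the strict star-shapedness $\lan x-p,\nu(x)\ran>0$ and the divergence theorem $\int_\Ga \lan x-p,\nu\ran\,dS_x=N|\Om|$, we reach the contradiction
$$
|\Ga|=\int_\Ga H\,\lan x-p,\nu\ran\,dS_x<H_0\int_\Ga\lan x-p,\nu\ran\,dS_x = H_0\,N|\Om|=|\Ga|.
$$
So only the first alternative is possible and $\Om$ is a ball.

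The delicate point is case (ii): the strict star-shapedness hypothesis is precisely what converts the one-sided curvature bound $H<H_0$, extracted from Hopf's lemma applied to $P$, into a strict sign contradiction via Minkowski's identity. This is exactly why the argument is less general than Serrin's classical moving-plane proof, which requires no star-shapedness.
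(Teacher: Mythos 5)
Your proof is correct. The ``only if'' direction and case (i) coincide with the paper's argument: substituting $u_\nu=1/H$ makes the boundary term in \eqref{fundamental} equal to $N|\Om|$ (via \eqref{volume}), forcing the Cauchy--Schwarz deficit to vanish.

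For case (ii) you take a slightly different, heavier-handed route than the paper. The paper only invokes the \emph{weak} maximum principle for the subharmonic function $P$: since $P$ is constant on $\Ga$ and attains its maximum there, $P_\nu\ge 0$, i.e.\ $1-Hu_\nu\ge 0$ on $\Ga$. It then integrates $(1-Hu_\nu)\lan x-p,\nu\ran\ge 0$ over $\Ga$ and, by Minkowski's identity \eqref{minkowski} together with $\int_\Ga\lan x-p,\nu\ran\,dS_x=N|\Om|$, finds that the integral vanishes; strict star-shapedness then forces $1-Hu_\nu\equiv 0$, and the problem is reduced to case (i). You instead invoke the \emph{strong} maximum principle to set up a dichotomy and Hopf's boundary point lemma to extract the strict inequality $P_\nu>0$, reaching a contradiction in the nonconstant branch rather than reducing to (i). Both arguments are sound and hinge on the same two ingredients (the sign of $P_\nu$ on $\Ga$ and Minkowski's identity tested against $\lan x-p,\nu\ran$), but the paper's version avoids the case split and Hopf's lemma entirely, which makes it a bit more economical; your version has the minor advantage of making explicit that a nonconstant $P$ is impossible and thereby concluding directly without passing through (i).
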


\begin{proof}
It is clear that, if $\Om$ is a ball, then (i) and (ii) hold. Conversely, we shall check that the right-hand side of \eqref{fundamental} is zero when one of the items (i) or (ii) occurs.
\par
(i) Notice that our assumption implies that $H$ must be positive, since $u_{\nu}$ is positive and finite. Since \eqref{volume} holds, then
\eqref{fundamental} can be written as
\begin{equation}
\label{fundamental-identity2}
\frac1{N-1}\int_{\Om} \left\{ |\na ^2 u|^2-\frac{(\De u)^2}{N}\right\}\,dx = \int_{\Ga}(1-H u_\nu)\, u_\nu\,dS_x,
\end{equation}
and the conclusion follows at once.
\par
(ii) 
Let $u_\nu$ be constant on $\Ga$; by \eqref{volume} we know that that constant equals the value $R$ given in \eqref{R and H_0}. Also, notice that $1-H u_\nu\ge 0$ on $\Ga$. In fact, the function $P$ in \eqref{P-function}
is subharmonic in $\Om$, since $\De P\ge 0$ by \eqref{differential-identity}. Thus, it attains its maximum on $\Ga$, where it is constant.
We thus have that
$$
0\le P_\nu=u_\nu u_{\nu \nu}-u_\nu=(N-1)\,(1-H u_\nu)\,u_\nu \ \mbox{ on } \ \Ga.
$$
Now,
\begin{multline*}
0\le\int_\Ga [1-H(x) u_\nu(x)]\,\lan (x-p),\nu(x)\ran\,dS_x=\\
\int_\Ga [1-H(x) R]\,\lan (x-p),\nu(x)\ran\,dS_x=0,
\end{multline*}
by \eqref{minkowski}. Thus, $1-H u_\nu\equiv 0$ on $\Ga$ and hence (i) applies. 
\end{proof}

\begin{rem}
{\rm
The proof of (ii) seems to be new. Even if it is restricted to the case of strictly star-shaped domains, it might be
used to obtain better stability estimates for Serrin's symmetry result. 
\par
We recall that, by following the tracks of Weinberger's proof (\cite{We}) and its modification due to Payne and Schaefer (\cite{PS}), one
can write the identity
\begin{equation}
\label{wps}
\int_{\Om} (-u)\,\left\{ |\na ^2 u|^2- \frac{ (\De u)^2}{N} \right\}\,dx=
\frac{1}{2}\,\int_\Ga (u_\nu^2-R^2)\,(u_\nu-x\cdot\nu)\,dS_x,
\end{equation}
that gives at once spherical symmetry if $u_\nu$ is constant on $\Ga$, without major restrictions on $\Om$ other than on the regularity of $\Ga$. The presence of the factor $-u$ at the left-hand side, however, may cause additional difficulties in the study of the stability issue.
}
\end{rem}

\medskip

We conclude this section by showing that \eqref{fundamental} can be rearranged into an identity that implies Heintze-Karcher's inequality (see \cite{HK}). This proof is slightly different from that of A. Ros in \cite{Ro} and relates the equality case for Heintze-Karcher's inequality to the overdetermined problem considered in (i) of Theorem \ref{th:torsion}.

\begin{thm}[SBT and Heintze-Karcher inequality]
\label{th:heintze-karcher}
Let $\Ga\subset\RR^N$ be a surface of class  $C^{2, \alpha}$, which is the boundary of a bounded domain $\Om\subset\RR^N$ and let $u\in C^{2,\al}(\ol{\Om})$ be the solution of \eqref{serrin1}. 
\par
If $\Ga$ is strictly mean-convex, then we have the following identity:
\begin{equation}
\label{heintze-karcher-identity}
\frac1{N-1}\,\int_{\Om} \left\{ |\na ^2 u|^2-\frac{(\De u)^2}{N}\right\}\,dx +\int_\Ga\frac{(1-H\,u_\nu)^2}{H}\,dS_x=
\int_\Ga\frac{dS_x}{H}-N |\Om|.
\end{equation}
\par
In particular, the Heintze-Karcher's inequality
\begin{equation}
\label{heintze-karcher}
\int_\Ga \frac{dS_x}{H}\ge N |\Om|
\end{equation}
holds and the sign of equality 
is attained in \eqref{heintze-karcher} if and only if $\Om$ is a ball.
\par
Thus, if $H$ is constant on $\Ga$, then $\Ga$ is a sphere. 
\end{thm}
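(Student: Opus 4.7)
The plan is to derive \eqref{heintze-karcher-identity} by a straightforward algebraic rearrangement of the fundamental identity \eqref{fundamental}, using the basic formula $\int_\Ga u_\nu\,dS_x = N|\Om|$ from \eqref{volume}. Expanding
$$
\frac{(1-H\,u_\nu)^2}{H} = \frac{1}{H} - 2\,u_\nu + H\,(u_\nu)^2
$$
and integrating over $\Ga$, I can solve for $\int_\Ga H\,(u_\nu)^2\,dS_x$ in terms of $\int_\Ga dS_x/H$, $\int_\Ga (1-Hu_\nu)^2/H\,dS_x$, and $2N|\Om|$. Substituting this expression into the right-hand side of \eqref{fundamental} and collecting terms produces \eqref{heintze-karcher-identity} immediately; the derivation is essentially bookkeeping.

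Once \eqref{heintze-karcher-identity} is in hand, the rest is short. Strict mean-convexity says $H > 0$ on $\Ga$, so both summands on the left-hand side of \eqref{heintze-karcher-identity} are non-negative (the first by Cauchy--Schwarz applied to $\na^2 u$), and discarding them yields \eqref{heintze-karcher}. For the equality case, $\int_\Ga dS_x/H = N|\Om|$ forces both left-hand side summands to vanish; the vanishing of the Cauchy--Schwarz deficit combined with $\De u = N$ forces $\na^2 u = I$, so $u$ must be a quadratic polynomial of the form \eqref{quadratic}, and then $u = 0$ on $\Ga$ gives $|x-z|^2 = a$ on $\Ga$, i.e. $\Ga$ is a sphere, exactly as in the last part of the proof of Theorem \ref{th:SBT}.

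For the concluding statement, suppose only that $H$ is constant on $\Ga$. Pulling $H$ out of Minkowski's identity \eqref{minkowski} and using the divergence-theorem formula $\int_\Ga \langle x-p,\nu(x)\rangle\,dS_x = N|\Om|$ forces $H \equiv |\Ga|/(N|\Om|) > 0$; in particular $\Ga$ is automatically strictly mean-convex, so the previous argument applies, and $\int_\Ga dS_x/H = N|\Om|$ directly, meaning equality is attained in \eqref{heintze-karcher}. The preceding paragraph then concludes that $\Ga$ is a sphere. The only mild subtlety in the whole argument is precisely this last point, namely verifying that constancy of $H$ forces positivity so that the identity and the equality case make sense, which is what Minkowski's identity provides.
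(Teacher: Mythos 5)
Your proof is correct and follows essentially the same route as the paper: both derive \eqref{heintze-karcher-identity} by expanding $(1-H u_\nu)^2/H$, integrating over $\Ga$, invoking \eqref{volume}, and combining with the fundamental identity (the paper uses the equivalent rearrangement \eqref{fundamental-identity2} rather than \eqref{fundamental} directly), and both conclude the equality case by the vanishing of the Cauchy--Schwarz deficit. Your explicit observation that Minkowski's identity forces $H\equiv H_0>0$, so that constancy of $H$ automatically yields strict mean-convexity, is a small but welcome clarification that the paper leaves implicit.
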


\begin{proof}
By integrating on $\Ga$ the identity
$$
\frac{(1-H\,u_\nu)^2}{H}=-(1-H\,u_\nu) u_\nu+\frac1{H}-u_\nu,
$$
summing the result up to \eqref{fundamental-identity2} and taking into account \eqref{volume},
we get \eqref{heintze-karcher-identity}.
\par
Both summands at the left-hand side of \eqref{heintze-karcher-identity}  are non-negative and hence \eqref{heintze-karcher} follows. If the right-hand side is zero, those summands must be zero. The vanishing of the first summand implies that $\Om$ is a ball, as already noticed. Note in passing that the vanishing of the second summand gives that $u_\nu=1/H$ on $\Ga$, which also implies radial symmetry, by Theorem \ref{th:torsion}.  
\par
Finally, if $H$ equals some constant on $\Ga$, we know that such a constant must have the value $H_0$ in \eqref{R and H_0}, that implies that the right-hand side of  \eqref{heintze-karcher-identity} is null and hence, once again, $\Om$ must be a ball.
\end{proof}

\section{Some estimates for harmonic functions} 
\label{sec:estimates}

We begin by setting some relevant notations. By $\Ga\subset\RR^N$, $N\ge 2$, we shall always denote a hypersurface of class $C^{2,\al}$, $0<\al<1$, that is the boundary of a bounded domain $\Om$. 
By $|\Om|$ and $|\Ga|$, we will denote indifferently the $N$-dimensional Lebesgue measure of $\Om$
and the surface measure of $\Ga$. The {\it diameter} of $\Om$ will be indicated by $d_\Om$. 
\par
Moreover, since $\Ga$ is bounded and of class $C^{2,\al}$, it has the properties of the {\it uniform interior and exterior sphere condition}, whose respective radii will be designated by $r_i$ and $r_e$;
namely, there exists $r_e > 0$ (resp. $r_i>0$) such that for each $p \in \Ga$ there exists a ball $B \subset \RR^N \setminus \ol{\Om}$ (resp. $B\subset\Om$) of radius $r_e$ (resp. $r_i$) such that
$\ol{B} \cap\Ga= \{ p \}$.
\par
The assumed regularity of $\Ga$ ensures that the unique solution of \eqref{serrin1} is of class $C^{2,\al}$. Thus, we can define
\begin{equation}
\label{bound-gradient}
M=\max_{\ol{\Om}} |\na u|=\max_{\Ga} u_\nu.
\end{equation}
 \par
We finally recall that, for a point $z\in\Om$, $\rho_i$ and $\rho_e$ denote 
the radius of the largest ball centered at $z$ and contained in $\Om$
and that of the smallest ball that contains $\Om$ with the same center, as defined in \eqref{def-rhos}.
\par
As already mentioned in the Introduction, the first summand in \eqref{H-fundamental} or in \eqref{heintze-karcher-identity} can be suitably re-written, in terms of the harmonic function $h=q-u$, as
\begin{equation}
\label{L2-norm-hessian}
\frac1{N-1}\int_{\Om} \left\{ |\na ^2 u|^2-\frac{(\De u)^2}{N}\right\}dx = \frac1{N-1}\,\int_\Om |\na^2 h|^2 dx,
\end{equation}
where $q$ is any quadratic polynomial of the form \eqref{quadratic}.
Also, if we choose the center $z$ of the paraboloid \eqref{quadratic} in $\Om$, 
we have \eqref{oscillation}, that is 
$$
\max_{\Ga} h-\min_{\Ga} h=\max_{\Ga} q-\min_{\Ga} q=\frac12\,\left(\rho_e^2-\rho_i^2\right).
$$
\par
A stability estimate for the spherical symmetry of $\Ga$ will then be obtained, via identity 
\eqref{H-fundamental} or \eqref{heintze-karcher-identity}, if we associate the oscillation of $h$ on $\Ga$
with \eqref{L2-norm-hessian}. 

\medskip

To realize this agenda, we start by proving some Poincar\'e-type inequalities for harmonic functions.

\begin{lem}[Poincar\'e-type inequalities]
\label{lem:two-inequalities}
There exist two positive constants, $\ol{\mu}(\Om)$ and $\mu_0(\Om)$, such that
\begin{equation}
\label{harmonic-poincare}
\int_\Om v^2 dx\le \ol{\mu}(\Om)^{-1} \int_\Om |\na v|^2 dx,
\end{equation}
or
\begin{equation}
\label{harmonic-quasi-poincare}
\int_\Om v^2 dx\le \mu_0(\Om)^{-1} \int_\Om |\na v|^2 dx,
\end{equation}
for every function $v\in W^{1,2}(\Om)$ which is harmonic in $\Om$ and such that
\begin{equation}
\label{normalization1}
\int_\Om v\,dx=0 
\end{equation}
or, respectively,
\begin{equation}
\label{normalization2}
v(x_0)=0,
\end{equation}
where $x_0$ is a given point in $\Om$. 
\end{lem}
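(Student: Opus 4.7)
The first inequality \eqref{harmonic-poincare} is, in fact, just the classical Poincar\'e--Wirtinger inequality: with $\ol{\mu}(\Om)$ taken to be the first non-trivial Neumann eigenvalue of $-\Delta$ on $\Om$, one has
\[
\int_\Om (w-w_\Om)^2\,dx \le \ol{\mu}(\Om)^{-1}\int_\Om |\na w|^2\,dx
\]
for every $w\in W^{1,2}(\Om)$, where $w_\Om=|\Om|^{-1}\int_\Om w\,dx$; specialising to $w=v$ with $v_\Om=0$ gives \eqref{harmonic-poincare}. No use of harmonicity is required here, and $\ol{\mu}(\Om)$ depends only on $\Om$ through its geometry. (That a positive such eigenvalue exists for a bounded Lipschitz, hence $C^{2,\al}$, domain is classical.)

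For \eqref{harmonic-quasi-poincare} the normalisation is the pointwise condition $v(x_0)=0$, which is not a continuous linear condition on general $W^{1,2}$ functions; it is here that harmonicity will enter. The plan is to split
\[
\int_\Om v^2\,dx=\int_\Om (v-v_\Om)^2\,dx+|\Om|\,v_\Om^2,
\]
control the first summand by \eqref{harmonic-poincare}, and then control $v_\Om^2$ in terms of $\int_\Om|\na v|^2\,dx$ by exploiting the mean value property.

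To bound $v_\Om^2$, fix any radius $r>0$ such that $B_r(x_0)\subset\Om$ (for instance $r=\dist(x_0,\Ga)$). Since $v$ is harmonic, so is $v-v_\Om$, and the mean value property together with $v(x_0)=0$ yields
\[
-v_\Om=v(x_0)-v_\Om=\frac{1}{|B_r(x_0)|}\int_{B_r(x_0)} (v-v_\Om)\,dx.
\]
A Cauchy--Schwarz application then gives
\[
v_\Om^2\le \frac{1}{|B_r(x_0)|}\int_{B_r(x_0)} (v-v_\Om)^2\,dx\le\frac{1}{|B_r(x_0)|}\int_\Om(v-v_\Om)^2\,dx,
\]
which combined with the first inequality produces
\[
v_\Om^2\le \frac{1}{|B_r(x_0)|\,\ol{\mu}(\Om)}\int_\Om |\na v|^2\,dx.
\]
Substituting into the decomposition above gives \eqref{harmonic-quasi-poincare} with
\[
\mu_0(\Om)^{-1}=\left(1+\frac{|\Om|}{|B_r(x_0)|}\right)\ol{\mu}(\Om)^{-1}.
\]

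The only truly delicate point is the second inequality, and the key idea is the one just described: the mean value property of harmonic functions converts the otherwise pathological pointwise constraint $v(x_0)=0$ into an integral constraint on $B_r(x_0)$, to which the Poincar\'e--Wirtinger inequality applies. Routine calculations aside, this is the whole content of the lemma.
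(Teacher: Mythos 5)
Your proof is correct, but it follows a genuinely different route from the paper's. The paper proceeds by the direct method of the calculus of variations: it \emph{defines} $\ol{\mu}(\Om)$ and $\mu_0(\Om)$ as infima of the Rayleigh quotient over harmonic functions in $W^{1,2}(\Om)$ subject to \eqref{normalization1} or \eqref{normalization2}, takes a minimizing sequence, and shows the infimum is attained --- the mean value property enters there only to guarantee that the weak $W^{1,2}$-limit (which a priori need not be harmonic) is in fact harmonic and satisfies the pointwise constraint, since the minimizing sequence converges locally uniformly. Your argument instead treats \eqref{harmonic-poincare} as the classical Poincar\'e--Wirtinger inequality and then handles \eqref{harmonic-quasi-poincare} by converting the pointwise constraint $v(x_0)=0$ into an integral constraint over a ball via the mean value property, which is then absorbed into the $L^2$-decomposition $\int_\Om v^2 = \int_\Om(v-v_\Om)^2 + |\Om|\,v_\Om^2$. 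Both approaches are valid; yours is more elementary and constructive, delivering an explicit lower bound $\mu_0(\Om) \ge \bigl(1+|\Om|/|B_r(x_0)|\bigr)^{-1}\ol{\mu}(\Om)$, whereas the paper's variational argument defines the optimal constants but is nonconstructive. It is worth noting that the paper itself later uses essentially your idea, in Remark~\ref{rem:spectral lower bounds}\,(iii), to obtain a computable lower bound for $\mu_0(\Om)$ --- so your argument is not only correct but anticipates the paper's own quantification. One minor point: you take $\ol{\mu}(\Om)$ to be the first nontrivial Neumann eigenvalue, whereas the paper's $\ol{\mu}(\Om)$ is the (strictly larger) infimum restricted to harmonic competitors; since the lemma only asserts existence of positive constants, this discrepancy is harmless, but it does mean your constants are not literally those the paper refers to in subsequent statements.
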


\begin{proof}
We define
\begin{equation}
\label{inf-harmonic-poincare}
\ol{\mu}(\Om)=\inf\left\{\int_\Om |\na v|^2\,dx: \int_\Om v^2 dx=1,\, \De v=0 \mbox{ in } \Om, \int_\Om v\,dx=0\right\}
\end{equation}
and
\begin{equation}
\label{inf-harmonic-poincare2}
\mu_0(\Om)=\inf\left\{\int_\Om |\na v|^2\,dx: \int_\Om v^2 dx=1, \De v=0 \mbox{ in } \Om, v(x_0)=0\right\}.
\end{equation}
If we prove that the two infima are attained, we obtain that they are positive and hence \eqref{harmonic-poincare} and \eqref{harmonic-quasi-poincare} hold.
\par
We know that we can find a minimizing sequence $\{ v_n\}_{n\in\NN}$ of \eqref{inf-harmonic-poincare}
or \eqref{inf-harmonic-poincare2}, that converges in $L^2(\Om)$ and weakly in $W^{1,2}(\Om)$ to a function $v$ in $W^{1,2}(\Om)$. Also, by the mean value property for harmonic functions, this sequence converges uniformly on
compact subsets of $\Om$, that implies that $v$ is harmonic in $\Om$. Thus, 
we easily infer that 
$$
\int_\Om v^2 dx=1,
$$
and $\int_\Om v\,dx=0$, if $\{ v_n\}_{n\in\NN}$ is minimizing the problem \eqref{inf-harmonic-poincare}, or $v(x_0)=0$, if it is minimizing \eqref{inf-harmonic-poincare2}.
\par
Finally, we have that
$$
\ol{\mu}(\Om)=\liminf_{n\to\infty}\int_\Om|\na v_n|^2 dx\ge\int_\Om |\na v|^2 dx\ge\ol{\mu}(\Om),
$$
by the weak convergence in $W^{1,2}(\Om)$. This same conclusion holds for problem \eqref{inf-harmonic-poincare2}.
\end{proof}

\begin{rem}
\label{rem:eigenvalues}
{\rm
It is clear that 
$
\ol{\mu}(\Om)>\mu_2(\Om),
$
where $\mu_2(\Om)$ is the {\it second Neumann eigenvalue}. Moreover,
$$
\mu_0(\Om)\le\ol{\mu}(\Om).
$$
In fact, let
$$
v_0=\frac{v-v(x_0)}{1+|\Om|\,v(x_0)^2},
$$
where $v$ is a minimizer for \eqref{inf-harmonic-poincare}; $v_0$ is harmonic in $\Om$, $v_0(x_0)=0$, and $\int_\Om v_0^2\,dx=1$. Therefore,
$$
\mu_0(\Om)\le\int_\Om |\na v_0|^2\,dx=\frac{\int_\Om |\na v|^2\,dx}{[1+|\Om|\, v(x_0)^2]^2}=
\frac{\ol{\mu}(\Om)}{[1+|\Om|\, v(x_0)^2]^2}\le\ol{\mu}(\Om).
$$
}\end{rem}

\bigskip

The crucial result is Theorem \ref{thm:W22-stability}, in which we associate the oscillation of the already defined harmonic function $h=q-u$,
and hence the difference $\rho_e-\rho_i$, with the $L^2$-norm $\nr \na^2 h\nr_{2,\Om}$ of its Hessian matrix. In the following lemma, we start by linking that oscillation with the $L^2$-norm of $h-h(z)$.
To this aim, we define the {\it parallel set} as
$$
\Om_\si=\{ y\in\Om: \dist(y,\Ga)>\si\} \quad \mbox{ for } \quad 0<\si<r_i.
$$
.
\begin{lem}
\label{lem:L2-estimate-oscillation}
Let $\Om\subset\RR^N$, $N\ge 2$, be a bounded domain with boundary of class $C^{2,\al}$.
Set $h=q-u$, where $u$ is the solution of \eqref{serrin1} and $q$ is any quadratic polynomial as in \eqref{quadratic} with $z\in\Om$.
\par
Then, if
\begin{equation}
\label{smallness}
\nr h-h(z)\nr_2< \frac{\sqrt{|B|}}{N\,2^{N+1}}\,M\, r_{i}^{\frac{N+2}{2}} 
\end{equation}
holds, we have that
\begin{equation}
\label{L2-stability}
\rho_e-\rho_i\le a_N\, \frac{ M^{ \frac{N}{N+2} } }{ |\Om|^{\frac{1}{N}} }\, \nr h-h(z)\nr_2^{ 2/(N+2) },
\end{equation}
where

\begin{equation}
\label{provacondefcostaN}
a_N= \frac{ 2^{2+ \frac{N}{N+2}} \, (N+2)}{N^{\frac{N}{N+2}}} \,|B|^{\frac{1}{N} - \frac{1}{N+2}}.
\end{equation}
\end{lem}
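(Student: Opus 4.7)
I would reduce the estimate on $\rho_e-\rho_i$ to a bound on the boundary values of the auxiliary harmonic function $\tilde h := h-h(z)$, and then bound those values in terms of $\|\tilde h\|_2$ and $M$ via the mean-value property combined with a short Lipschitz transfer to the boundary.

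\emph{Reduction to the oscillation on $\Gamma$.} Let $x_e,x_i\in\Gamma$ realize $\rho_e=|x_e-z|$ and $\rho_i=|x_i-z|$. Since $u\equiv 0$ on $\Gamma$, we have $h=q$ there; hence
\[\rho_e^2-\rho_i^2 \;=\; 2\bigl[q(x_e)-q(x_i)\bigr] \;=\; 2\bigl[\tilde h(x_e)-\tilde h(x_i)\bigr] \;\le\; 4\sup_\Gamma|\tilde h|.\]
Combining with $\rho_e-\rho_i=(\rho_e^2-\rho_i^2)/(\rho_e+\rho_i)$ and the volume lower bound $\rho_e+\rho_i\ge\rho_e\ge(|\Om|/|B|)^{1/N}$ (from $\Om\subset B(z,\rho_e)$) yields
\[\rho_e-\rho_i \;\le\; 4\,(|B|/|\Om|)^{1/N}\,\sup_\Gamma|\tilde h|.\]

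\emph{Mean-value estimate and transfer to the boundary.} For $p\in\Gamma$ and $\epsilon\in(0,r_i]$, set $y=p-\epsilon\,\nu(p)$; the uniform interior sphere condition gives $B(y,\epsilon)\subset\Om$. Harmonicity of $\tilde h$ combined with the mean-value property and Cauchy--Schwarz produces
\[|\tilde h(y)|\;\le\; \frac{\|\tilde h\|_2}{\sqrt{|B|}\,\epsilon^{N/2}},\]
and the fundamental theorem of calculus along the segment $[y,p]$ gives $|\tilde h(p)-\tilde h(y)|\le\epsilon\sup_{[y,p]}|\nabla h|$. Using $\nabla h=(x-z)-\nabla u$ together with $|\nabla u|\le M$ on $\ol\Om$ (the classical consequence of the subharmonicity of the $P$-function $|\nabla u|^2/2-u$) and controlling $|x-z|$ by $M$ in the relevant region, I obtain a bound of the form $|\nabla h|\le c\,M$, and therefore
\[\sup_\Gamma|\tilde h| \;\le\; \frac{\|\tilde h\|_2}{\sqrt{|B|}\,\epsilon^{N/2}} \;+\; c\,M\,\epsilon.\]
Minimizing the right-hand side in $\epsilon>0$ yields $\epsilon_*\propto(\|\tilde h\|_2/(M\sqrt{|B|}))^{2/(N+2)}$, and the smallness hypothesis \eqref{smallness} is tailored precisely to ensure $\epsilon_*\le r_i$, so the construction is admissible. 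Substituting $\epsilon_*$ back produces $\sup_\Gamma|\tilde h|\le c_N\,M^{N/(N+2)}\,|B|^{-1/(N+2)}\,\|\tilde h\|_2^{2/(N+2)}$, and inserting this into the previous display delivers \eqref{L2-stability} with the explicit constant $a_N$ after bookkeeping.

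\textbf{Main obstacle.} The delicate step is the gradient bound $|\nabla h|\le cM$: the direct estimate $|\nabla h|\le|x-z|+M$ is not clean enough, and replacing $|x-z|$ by a quantity comparable to $M$ requires either a careful local argument near the boundary points $x_e,x_i$ or an appeal to the smallness hypothesis to stay in the near-spherical regime. Once such a bound is in place, the optimization over $\epsilon$ and the explicit evaluation of $a_N$ reduce to routine algebra.
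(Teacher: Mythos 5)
Your plan is close to the paper's strategy (normal translation into the interior, mean--value property, then optimization over the translation depth $\sigma$), but the obstacle you flag at the end is exactly where the argument breaks, and it is not a bookkeeping issue: there is no estimate of the form $|\nabla h|\le cM$ in general, since $\nabla h(x)=(x-z)-\nabla u(x)$ and $|x-z|$ can be as large as $\rho_e\le d_\Om$, which is not controlled by $M$. Any crude Lipschitz transfer $|\tilde h(p)-\tilde h(y)|\le \sigma\sup|\nabla h|$ thus introduces a dependence on $d_\Om$, and the constant $a_N$ in \eqref{provacondefcostaN} depends only on $N$, $M$, and $|\Om|$, so you would prove a weaker statement than the lemma actually claims.

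The paper sidesteps the gradient bound entirely by integrating the affine part of $\nabla h$ \emph{exactly} rather than bounding it. With $y_j=x_j-\sigma\nu(x_j)$ one writes $h(y_j)-h(x_j)=-\int_0^\sigma\langle\nabla h,\nu(x_j)\rangle\,dt$ and splits $\nabla h=(x-z)-\nabla u$. Because $x_j$ extremizes $q$ on $\Gamma$, the vector $x_j-z$ is parallel to $\nu(x_j)$, so $\int_0^\sigma\langle x_j-t\nu(x_j)-z,\nu(x_j)\rangle\,dt=\rho_j\sigma-\tfrac12\sigma^2$ is computed in closed form; only the $\nabla u$ contribution is estimated by $M\sigma$. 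Combining with $2h(x_j)=\rho_j^2$ gives the identity $\tfrac12(\rho_j-\sigma)^2=h(y_j)-\int_0^\sigma\langle\nabla u,\nu(x_j)\rangle\,dt$, and subtracting the $j=i$ and $j=e$ versions produces $\tfrac12(\rho_e-\rho_i)(\rho_e+\rho_i-2\sigma)\le h(y_e)-h(y_i)+2M\sigma$. The apparently dangerous $|x-z|\sigma$ term has thus migrated to the \emph{left}-hand side as the harmless shift $-2\sigma$ in the factor $\rho_e+\rho_i$, which is then bounded below by the volume bound $\rho_e+\rho_i\ge(|\Om|/|B|)^{1/N}$ for $\sigma$ small. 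The mean--value estimate at $y_j$ and the minimization in $\sigma$ then run exactly as you envisioned, and \eqref{smallness} is what guarantees the optimal $\sigma$ stays below $\min\{\sigma_0,r_i\}$. So the missing idea in your proposal is: do not bound $\nabla h$; instead exploit the parallelism of $x_j-z$ with $\nu(x_j)$ at the extremal points and the quadratic identity for $h(x_j)$ so that the linear drift cancels algebraically.
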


\begin{proof}
Let $x_i$ and $x_e$ be points in $\Ga$ that minimize (resp. maximize) $q$ on $\Ga$ and, for 
$$
0<\si<r_i,
$$ define the two points in $y_i, y_e\in\pa\Om_\si$ by
$y_j=x_j-\si\nu(x_j)$, $j=i, e$. 
\par
We have that
$$
h(y_j)-h(x_j)=-\int_0^\si \lan\na h(x_j-t\nu(x_j)),\nu(x_j)\ran\,dt
$$
and hence, by recalling that $\na h(x)=x-z-\na u(x)$ and that $x_j-z$ is parallel to $\nu(x_j)$, we obtain that
$$
h(y_j)-h(x_j)=-|x_j-z|\,\si+\frac12\,\si^2+\int_0^\si \lan\na u(x_j-t\nu(x_j)),\nu(x_j)\ran\,dt.
$$
Thus, the fact that $2\,h(x_j)=\rho_j^2$ and $|x_j-z|=\rho_j$ yields that
\begin{equation*}
\frac12\,\rho_j^2-\rho_j\,\si+\frac12\,\si^2=h(y_j)-\int_0^\si \lan\na u(x_j-t\nu(x_j)),\nu(x_j)\ran\,dt, \ j=i, e,
\end{equation*}
and hence
\begin{equation}
\label{parallel estimate vecchia1}
\frac12\,(\rho_e-\rho_i)(\rho_e+\rho_i-2\si)\le h(y_e)-h(y_i)+2 M \si,
\end{equation}
for every $0<\si< \min \left\lbrace  \frac{ \rho_e+\rho_i }{2} , r_i \right\rbrace $.
\par
Since $h$ is harmonic and $y_j\in \overline{ \Om }_\si $, $j=i, e$, we can use the mean value property for the balls with radius $\si$ centered at  $y_j$ and obtain: 
\begin{multline*}
|h(y_j)-h(z)|\le \frac1{|B|\, \si^N}\,\int_{B_\si(y_j)}|h-h(z)|\,dy\le \\
\frac1{\sqrt{|B|\, \si^N}}\,\left[\int_{B_\si(y_j)}|h-h(z)|^2\,dy\right]^{1/2}\le 
\frac1{\sqrt{|B|\, \si^N}}\,\left[\int_{\Om}|h-h(z)|^2\,dy\right]^{1/2}.
\end{multline*}
by H\"older's inequality. This and inequality \eqref{parallel estimate vecchia1} then yield that
\begin{equation*}
\frac{1}{2} \, (\rho_e+\rho_i-2\si)(\rho_e-\rho_i)\le 2 \, \left[  \frac{\nr h-h(z)\nr_2}{\sqrt{|B|}\,\si^{N/2}}+ M \si \right] ,
\end{equation*}
for every $0<\si< \min \left\lbrace  \frac{\rho_e+\rho_i }{2} , r_i \right\rbrace $.
\par
Now, observe that, for $0<\si<\si_0$ with
$$
\si_0=\frac14\,\left(\frac{|\Om|}{|B|}\right)^{1/N},
$$
then $\rho_e+\rho_i-2\si>2\si_0$, and hence
\begin{equation}
\label{parallel-estimate}
\rho_e-\rho_i\le \frac{2}{\si_0}\left[\frac{\nr h-h(z)\nr_2}{\sqrt{|B|}\,\si^{N/2}}+M \si\right]
\ \mbox{ for every } \ 0<\si< \min \left\lbrace  \si_0 , r_i \right\rbrace .
\end{equation}
Therefore, by minimizing the right-hand side of \eqref{parallel-estimate}, we can conveniently choose 
$$
\si=\left(\frac{N\,\nr h-h(z)\nr_2 }{ 2 \, |B|^{1/2}\,M}\right)^{2/(N+2)} 
$$
in \eqref{parallel-estimate} and obtain \eqref{L2-stability}, if $\si < r_i/4<\min\{  \si_0 , r_i\}$;  \eqref{smallness} will then follow.  
\end{proof}

\medskip

To simplify formulas, in the remainder of this section and in Section \ref{sec:stability}, we shall always denote the constants only depending on the dimension by $k_N$ and $\al_N$. Their computation will be clear from the relevant proofs.

\medskip

A way to conveniently choose $z$ inside $\Om$ is to let $z$ be any (local) minimum point of $u$ 
in $\ol{\Om}$: we are thus sure that $z\in\Om$ and, also, we obtain that $\na h(z)=0$.
This remark and Lemmas \ref{lem:two-inequalities} and \ref{lem:L2-estimate-oscillation} give the following result.

\begin{thm}
\label{thm:W22-stability}
Let $z$ be any (local) minimum point of the solution $u$ of \eqref{serrin1} in $\ol{\Om}$. Set 
$h=q-u$, where $q$ is given by \eqref{quadratic}.
\par
If $N=2$ or $3$, then 
\begin{equation}
\label{Lipschitz-stability}
\rho_e-\rho_i\le C\,\nr \na^2 h\nr_{2,\Om},
\end{equation}
where
$$
C= k_N \, c \, \frac{ d_\Om^\ga}{|\Om|^{ \frac{1}{N}}} \, \frac{1+\mu_0(\Om)}{\mu_0(\Om)},
$$
$\ga$ is any number in $(0,1)$ for $N=2$, $\ga=1/2$ for $N=3$, and $c$ is the Sobolev
immersion constant of $C^{0,\ga}(\ol{\Om})$ in $W^{2,2}(\Om)$. 
\par
If else $N\ge 4$, then
\begin{equation}
\label{W22-stability}
\rho_e-\rho_i\le C\, \nr \na^2 h\nr_{2,\Om}^{2/(N+2)} 
\end{equation}
for
\begin{equation}
\label{W22-smallness}
\nr \na^2 h\nr_{2.\Om}<\ve,
\end{equation}
where
$$
C=k_N\, \frac{ M^{ \frac{N}{N+2} } }{ \mu_0(\Om)^{ \frac{2}{N+2} } \, |\Om|^{ \frac{1}{N} } }   \quad
\mbox{ and }
\quad
\ve= \al_N \,M \, \mu_0(\Om) \, r_{i}^{\frac{N+2}{2}} \, .
$$
\end{thm}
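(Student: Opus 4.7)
The plan is to connect $\rho_e-\rho_i$ with $\nr \na^2 h\nr_{2,\Om}$ by exploiting two facts that follow directly from the setup: first, $h$ is harmonic in $\Om$ (since $\De q=N=\De u$), and second, $\na h(z)=-\na u(z)=0$ because $z$ is a critical point of $u$ and $\na q(z)=0$. Consequently, each component $h_{x_i}$ is itself harmonic and vanishes at $z$. This puts us in a position to iterate Lemma \ref{lem:two-inequalities}.

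The central auxiliary inequality I would prove is
$$
\nr h-h(z)\nr_{2,\Om} \le \mu_0(\Om)^{-1}\,\nr \na^2 h\nr_{2,\Om}.
$$
This comes from two applications of \eqref{harmonic-quasi-poincare}: once to the harmonic function $h-h(z)$, which vanishes at $z$, giving $\nr h-h(z)\nr_2^2\le \mu_0(\Om)^{-1}\nr\na h\nr_2^2$; and once, componentwise, to each harmonic $h_{x_i}$ (which also vanishes at $z$), summed over $i$, giving $\nr\na h\nr_2^2\le \mu_0(\Om)^{-1}\nr\na^2 h\nr_2^2$. Combining the three $L^2$-terms and using $1+\mu_0^{-1}+\mu_0^{-2}\le (1+\mu_0)^2/\mu_0^2$ I would then obtain the companion $W^{2,2}$-bound
$$
\nr h-h(z)\nr_{W^{2,2}(\Om)} \le \frac{1+\mu_0(\Om)}{\mu_0(\Om)}\,\nr \na^2 h\nr_{2,\Om}.
$$

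For $N\ge 4$ the conclusion is then immediate: substitute the $L^2$-bound above into inequality \eqref{L2-stability} of Lemma \ref{lem:L2-estimate-oscillation} to get \eqref{W22-stability}, and translate the smallness hypothesis \eqref{smallness} into \eqref{W22-smallness} with $\al_N=\sqrt{|B|}/(N\,2^{N+1})$. For $N=2,3$, instead, I would use the Sobolev-Morrey embedding $W^{2,2}(\Om)\hookrightarrow C^{0,\ga}(\ol{\Om})$ with constant $c$, where $\ga\in(0,1)$ is arbitrary when $N=2$ (via $W^{2,2}\hookrightarrow W^{1,p}$ for large $p$) and $\ga=1/2$ when $N=3$. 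Since $(h-h(z))(z)=0$, this yields
$$
\max_{\ol{\Om}}|h-h(z)| \le c\,d_\Om^{\ga}\,\nr h-h(z)\nr_{W^{2,2}(\Om)} \le c\,d_\Om^{\ga}\,\frac{1+\mu_0(\Om)}{\mu_0(\Om)}\,\nr \na^2 h\nr_{2,\Om}.
$$
Since $\max_\Ga h-\min_\Ga h\le 2\max_{\ol{\Om}}|h-h(z)|$, combining this with \eqref{oscillation} and dividing by $\rho_e+\rho_i\ge \rho_e\ge (|\Om|/|B|)^{1/N}$ (which follows from $\Om\subset B_{\rho_e}(z)$) produces \eqref{Lipschitz-stability} with a constant of the advertised form.

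The main obstacle is conceptual rather than computational: one cannot rely on Sobolev embedding into $C^{0,\ga}$ in high dimension, and this forces the weaker exponent $2/(N+2)$ for $N\ge 4$ coming from the parallel-set argument of Lemma \ref{lem:L2-estimate-oscillation}. Once the two regimes are separated, the proof reduces to a short chain of Poincar\'e iterations followed by either the Sobolev step (low $N$) or a direct invocation of the previous lemma (high $N$), with all constants tracked explicitly through the single spectral parameter $\mu_0(\Om)$.
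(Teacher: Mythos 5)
Your proposal is correct and follows essentially the same strategy as the paper: two applications of the harmonic Poincar\'e inequality \eqref{harmonic-quasi-poincare} (to $h-h(z)$ and to the components of $\na h$, all vanishing at $z$) to bound the full $W^{2,2}$-norm by $\nr\na^2 h\nr_{2,\Om}$, then either the Sobolev--Morrey embedding when $N=2,3$ or Lemma \ref{lem:L2-estimate-oscillation} when $N\ge 4$. The only cosmetic difference is that the paper applies the H\"older seminorm bound directly to the two points on $\Ga$ where $h$ attains its extrema (distance at most $d_\Om$), whereas you pass through $\max_{\ol{\Om}}|h-h(z)|$ and incur an extra harmless factor of $2$ that is absorbed into $k_N$.
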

\begin{proof}
(i) Let $N=2$ or $3$.
By the Sobolev immersion theorem (see for instance \cite[Theorem 3.12]{Gi} or \cite[Chapter 5]{Ad}), we have that there is a constant $c$ such that, for any $v\in W^{2,2}(\Om)$, we have that
$$
\frac{|v(x)-v(y)|}{|x-y|^\ga}\le c\,\nr v\nr_{W^{2,2}(\Om)} \ \mbox{ for any  $x, y\in\ol{\Om}$ with $x\not=y$},
$$
where $\ga$ is any number in $(0,1)$ for $N=2$ and $\ga=1/2$ for $N=3$. 
\par
We now set $v=h-h(z)$, $x_0=z$, and apply \eqref{harmonic-quasi-poincare} twice: to $v$ and
to each first derivative of $v$ (since $\na v(z)=\na h(z)=0$). We obtain that
$$
\nr h-h(z)\nr_{W^{2,2}(\Om)}\le \sqrt{1+\mu_0(\Om)^{-1}+\mu_0(\Om)^{-2}}\,\nr \na^2 h\nr_{2,\Om}.
$$
Since $h-h(z)$ is harmonic, it attains its extrema on $\Ga$ and hence we have that
$$
\frac12\,(\rho_e^2-\rho_i^2)=\max_\Ga h-\min_\Ga h\le c\,d_\Om^\ga\, \sqrt{1+\mu_0(\Om)^{-1}+\mu_0(\Om)^{-2}}\,\nr \na^2 h\nr_{2,\Om}.
$$
 Thus, \eqref{Lipschitz-stability} follows by observing that $\rho_e+\rho_i\ge \rho_e\ge (|\Om|/|B|)^{1/N}$ and that the square root can be bounded by $1+\mu_0(\Om)^{-1}$.
\par
(ii) Let $N\ge 4$. By the same choice of $v$ and $x_0$ as in (i), we obtain that
$$
\nr h-h(z)\nr_{2,\Om}\le \mu_0(\Om)^{-1}\nr\na^2 h\nr_{2,\Om}.
$$
The conclusion then follows from Lemma \ref{lem:L2-estimate-oscillation}.
\end{proof}

\begin{rem}
{\rm
We recall that if $\Om$ has the strong local Lipschitz property (for the definition see \cite[Section 4.5]{Ad}), the immersion constant $c$ depends only on $N$ and the two Lipschitz parameters of the definition (see \cite[Chapter 5]{Ad}).
In our case $\Om$ is of class $C^{2,\al}$, hence obviously it has the strong local Lipschitz property and the two Lipschitz parameters can be easily estimated in terms of $\min \lbrace r_i,r_e \rbrace $.
}
\end{rem}

If we use \eqref{harmonic-poincare} instead of \eqref{harmonic-quasi-poincare},  we obtain a similar  result, but we must suppose that $\Om$ contains its center of mass. 

\begin{thm}
\label{th:W22-stability-neumann}
Let $z$ be the center of mass of $\Om$ and suppose that $z\in\Om$.  
Set $h=q-u$, where $q$ is given by \eqref{quadratic} and
set the constant $a$ in $q$ such that
$$
\int_\Om [h(x)-h(z)] dx=0 .
$$
\par
If $N=2$ or $3$, then \eqref{Lipschitz-stability} holds
with
$$
C= k_N \, c \, \frac{ d_\Om^\ga}{|\Om|^{ \frac{1}{N}}} \, \frac{1+\ol{\mu}(\Om)}{\ol{\mu}(\Om)},
$$
where $\ga$ and $c$ are the constants introduced in Theorem \ref{thm:W22-stability}.
\par
If else $N\ge 4$, then \eqref{W22-stability} holds if \eqref{W22-smallness} is in force, where
$$
C=k_N \, \frac{ M^{ \frac{N}{N+2} } }{ \ol{\mu}(\Om)^{ \frac{2}{N+2} } \, |\Om|^{ \frac{1}{N} } }   \quad
\mbox{ and }
\quad
\ve= \al_N \,M \, \ol{\mu}(\Om) \, r_{i}^{\frac{N+2}{2}} \, .
$$
\end{thm}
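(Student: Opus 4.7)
The plan is to reproduce the proof of Theorem \ref{thm:W22-stability} verbatim, replacing the pointwise-normalized Poincaré inequality \eqref{harmonic-quasi-poincare} (with constant $\mu_0(\Om)$) by the zero-mean Poincaré inequality \eqref{harmonic-poincare} (with constant $\ol\mu(\Om)$). The only fresh input required is to verify that the normalization \eqref{normalization1} is available both for $v = h - h(z)$ and for each partial derivative $v = \pa_i h$, so that \eqref{harmonic-poincare} may be applied to each.

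For $v = h - h(z)$, the zero-mean condition $\int_\Om v \, dx = 0$ is precisely the one imposed on $a$ in the statement. For $v = \pa_i h$, which is harmonic since $h$ is, I would compute, by the divergence theorem and the boundary datum $u = 0$ on $\Ga$,
$$
\int_\Om \pa_i h \, dx = \int_\Ga h\, \nu_i \, dS_x = \int_\Ga q\, \nu_i \, dS_x = \int_\Om \pa_i q \, dx = \int_\Om (x_i - z_i) \, dx,
$$
and this last integral vanishes precisely because $z$ is the center of mass of $\Om$. This is the one genuinely new step: the center-of-mass hypothesis is the exact geometric condition that makes \eqref{harmonic-poincare} applicable componentwise to $\na h$. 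In Theorem \ref{thm:W22-stability}, the corresponding step came for free from $\na h(z) = 0$, since $z$ was a critical point of $u$.

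Once both normalizations are in place, applying \eqref{harmonic-poincare} first componentwise to $\na h$ and then to $h - h(z)$ yields the chain
$$
\nr h - h(z) \nr_{2,\Om} \le \ol\mu(\Om)^{-1/2} \nr \na h \nr_{2,\Om} \le \ol\mu(\Om)^{-1} \nr \na^2 h \nr_{2,\Om}.
$$
From this point the argument is identical to that of Theorem \ref{thm:W22-stability}: for $N = 2$ or $3$, I would invoke the Sobolev embedding $W^{2,2}(\Om) \hookrightarrow C^{0,\ga}(\ol\Om)$ together with \eqref{oscillation} and the elementary lower bound $\rho_e + \rho_i \ge \rho_e \ge (|\Om|/|B|)^{1/N}$ to obtain \eqref{Lipschitz-stability} with the stated constant; for $N \ge 4$, I would feed the chain above into Lemma \ref{lem:L2-estimate-oscillation}, noting that \eqref{W22-smallness} transcribes through the same chain into the smallness hypothesis \eqref{smallness} of that lemma, producing \eqref{W22-stability}. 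Beyond the divergence-theorem observation that isolates the role of the center-of-mass condition, there is no substantive obstacle; the remainder is routine bookkeeping of constants.
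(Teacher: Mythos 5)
Your proposal is correct and follows essentially the same route as the paper: the key new ingredient in both is the verification that $\int_\Om \pa_i h \, dx = 0$, which you derive by applying the divergence theorem to $\na h$, using $u=0$ on $\Ga$, and invoking the center-of-mass condition, while the paper equivalently writes $\int_\Om \na h = \int_\Om (x-z)\,dx - \int_\Ga u\,\nu\,dS_x$ and kills both terms for the same two reasons. After that, chaining \eqref{harmonic-poincare} through $\na h$ and $h-h(z)$ and feeding into the Sobolev embedding (for $N=2,3$) or Lemma \ref{lem:L2-estimate-oscillation} (for $N\ge 4$) is exactly what the paper does.
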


\begin{proof}
The proof is similar to that of Theorem \ref{thm:W22-stability}. 
Since $z$ is the center of mass of $\Om$, we have that
\begin{multline*}
\int_\Om \na h(x)\,dx=\int_\Om [x-z-\na u(x)]\,dx=\\
\int_\Om x\,dx-|\Om|\,z-\int_\Ga u(x)\,\nu(x)\,dS_x=0.
\end{multline*}
We can thus apply \eqref{harmonic-poincare} to the first derivatives of $h$ and obtain that
$$
\nr \na h\nr_{2,\Om}\le \ol{\mu}(\Om)^{-1/2} \nr \na^2h\nr_{2,\Om}.
$$
Since we chose the constant $a$ in $q$ such that  
$$
\int_\Om [h(x)-h(z)] dx=0,
$$
we can apply \eqref{harmonic-poincare} again to obtain
$$
\nr h-h(z)\nr_{2, \Om} \le \ol{\mu}(\Om)^{-1/2} \nr \na h\nr_{2,\Om}.
$$
\par
Thus, we can write, as in Theorem \ref{thm:W22-stability}, that 
$$
\nr h-h(z)\nr_{W^{2,2}(\Om)}\le \sqrt{1+\ol{\mu}(\Om)^{-1}+\ol{\mu}(\Om)^{-2}}\,\nr \na^2 h\nr_{2,\Om} ,
$$
and 
$$
\nr h-h(z)\nr_{2,\Om}\le \ol{\mu}(\Om)^{-1}\nr\na^2 h\nr_{2,\Om}.
$$
The rest of the proof runs similarly to that of Theorem \ref{thm:W22-stability}.
\end{proof}

\par
If $\Om$ is convex, the presence of the spectral quantity $\mu_0(\Om)$ in \eqref{W22-stability} and \eqref{W22-smallness} can be removed and replaced by a purely geometric quantity. This can be done by modifying Lemma \ref{lem:L2-estimate-oscillation}. In this case,
we know that the solution of \eqref{serrin1} has only one minimum point (see \cite{Ko}, for instance) and this can be joined to any boundary point by a segment.

\begin{lem}
\label{lem:L2-estimate-oscillation-convex}
Let $\Om\subset\RR^N$ be a convex domain. Let $z$ be the minimum point of 
the solution $u$ of \eqref{serrin1}. Set $h=q-u$, where $q$ is given by \eqref{quadratic}. 
\par
Then \eqref{W22-stability} holds, if \eqref{W22-smallness} is in force, with
$$
C=k_N \, \frac{ d_{\Om}^{\frac{4}{N+2}} \, M^{ \frac{N}{N+2} } }{ |\Om|^{ \frac{1}{N} } } 
\quad \mbox{ and } \quad
\ve=  \al_N \, \frac{M \, r_{i}^{\frac{N+2}{2}} }{d_{\Om}^2} \, . 
$$
\end{lem}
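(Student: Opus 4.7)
The plan is to retrace the argument of Lemma~\ref{lem:L2-estimate-oscillation} step by step and replace only its key pointwise bound by one that uses convexity of $\Om$ directly, so that the quantity $\nr h-h(z)\nr_{2,\Om}$ (which in Theorem~\ref{thm:W22-stability} was controlled via the spectral constant $\mu_0(\Om)^{-1}$) never has to appear. The starting inequality, obtained exactly as in Lemma~\ref{lem:L2-estimate-oscillation} by taking $y_j=x_j-\sigma\nu(x_j)$ and $\sigma_0=(|\Om|/|B|)^{1/N}/4$, is
\begin{equation*}
(\rho_e-\rho_i)\,\sigma_0\le h(y_e)-h(y_i)+2M\sigma,\qquad 0<\sigma<\min(\sigma_0,r_i).
\end{equation*}
The task is to bound $|h(y_j)-h(z)|$ in terms of $\nr\na^2h\nr_{2,\Om}$ with a constant involving only $d_\Om$ and purely dimensional factors.

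The key observation exploits two facts: first, since $\Om$ is convex and $z\in\Om$, the segment $\gamma_j(t)=z+t(y_j-z)$, $t\in[0,1]$, lies in $\Om$; second, since $z$ is the unique (see \cite{Ko}) minimum point of $u$ and $\na q(z)=0$, one has $\na h(z)=0$. Taylor's formula therefore yields
\begin{equation*}
|h(y_j)-h(z)|\le|y_j-z|^2\int_0^1|\na^2h(\gamma_j(t))|\,dt\le d_\Om^2\int_0^1|\na^2h(\gamma_j(t))|\,dt.
\end{equation*}
Because each entry of $\na^2h$ is a harmonic function (as $\De h=0$), the mean value property together with Cauchy--Schwarz gives the interior pointwise estimate $|\na^2h(y)|\le k_N\,\nr\na^2h\nr_{2,\Om}\,\dist(y,\Ga)^{-N/2}$. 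Convexity of $\Om$ makes $\dist(\cdot,\Ga)$ concave along segments from $z$, whence $\dist(\gamma_j(t),\Ga)\ge\min(\dist(z,\Ga),\dist(y_j,\Ga))\ge\sigma$, provided $\sigma\le\dist(z,\Ga)$ (the condition $\dist(y_j,\Ga)\ge\sigma$ already being built into $y_j$). Inserting this into the integral along $\gamma_j$ yields the clean bound
\begin{equation*}
|h(y_j)-h(z)|\le k_N\,d_\Om^2\,\sigma^{-N/2}\,\nr\na^2h\nr_{2,\Om}.
\end{equation*}

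Substituting into the starting inequality and optimizing over $\sigma$ at $\sigma^*=\bigl((N/2)k_Nd_\Om^2\nr\na^2h\nr_{2,\Om}/M\bigr)^{2/(N+2)}$ produces \eqref{W22-stability} with $C$ exactly of the form stated, and the smallness hypothesis \eqref{W22-smallness} is precisely what guarantees $\sigma^*<\min(\sigma_0,r_i,\dist(z,\Ga))$ so the optimal $\sigma^*$ is admissible; the resulting $\ve=\al_N\,Mr_i^{(N+2)/2}/d_\Om^2$ matches the asserted expression. The main obstacle in this plan is the auxiliary geometric fact $\dist(z,\Ga)\ge r_i$ (used to ensure the step $\dist(\gamma_j(t),\Ga)\ge\sigma$ is compatible with the constraint $\sigma<r_i$ already imposed): I would establish it by a comparison-principle argument, observing that for any inscribed ball $B_{r_i}(c)\subset\Om$ tangent to $\Ga$, the torsion function satisfies $u(c)\le-r_i^2/2$ by comparison with the ball's torsion, so $u(z)\le u(c)\le-r_i^2/2$, and then combining this with Korevaar's convexity of level sets of $u$ in convex domains to locate $z$ away from $\Ga$.
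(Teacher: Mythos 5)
Your proposal tracks the paper's own proof almost step for step: the same starting inequality from Lemma~\ref{lem:L2-estimate-oscillation}, the same use of the straight segment from $z$ to $y_j$ (available by convexity) together with $\nabla h(z)=0$ and Taylor's formula, and the same optimization over $\sigma$. The only substantive technical variation is in how the Hessian along the segment is controlled. The paper applies the mean value theorem for integrals (with weight $1-t$) to replace $\int_0^1(1-t)\,\tfrac{d^2h}{dt^2}\,dt$ by an evaluation at a single point $z_j$, and then invokes the sub–mean value property of the subharmonic function $|\nabla^2 h|$ on $B_\sigma(z_j)$. You instead keep the integral and insert the interior pointwise bound $|\nabla^2 h(y)|\le k_N\,\|\nabla^2 h\|_{2,\Omega}\,\dist(y,\Gamma)^{-N/2}$, valid because each entry of $\nabla^2 h$ is harmonic. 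These two routes are equivalent and produce the same constants (the paper postpones the replacement $\rho_i^2+\rho_e^2\le\tfrac54\,d_\Omega^2$ to the end; you use $|y_j-z|^2\le d_\Omega^2$ from the outset).

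There is, however, one loose end in the step you yourself single out. Both you and the paper need every point of the segment $[z,y_j]$ to lie in $\overline\Omega_\sigma$ (the paper asserts this in claiming ``$z_j$ is some point in $\Omega_\sigma$''); by concavity of $\dist(\cdot,\Gamma)$ on the convex $\Omega$ this amounts to $\dist(z,\Gamma)=\rho_i\ge\sigma$, i.e.\ the extra constraint $\sigma\le\rho_i$ on top of $\sigma<\min\{\sigma_0,r_i\}$. Your attempt to discharge it via the auxiliary fact $\dist(z,\Gamma)\ge r_i$ is not a proof: from $u(z)\le -r_i^2/2$ and the Kennington/Korevaar concavity of $\sqrt{-u}$ one does \emph{not} immediately read off that $z$ lies at distance $\ge r_i$ from $\Gamma$ (the most one gets easily, using $|\nabla u|\le M$, is $\dist(z,\Gamma)\ge r_i^2/(2M)$, which is weaker unless $M\le r_i/2$). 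Either the geometric inequality $\rho_i\ge r_i$ for the torsion minimum in a convex domain must be established separately (the paper itself refers to~\cite{BMS} for lower bounds on $\dist(z,\Gamma)$, not to a free lunch), or the smallness threshold $\varepsilon$ should be stated with $\min\{r_i,\rho_i\}$ in place of $r_i$. So you have correctly located a subtlety that the paper treats tersely, but your proposed fix, as sketched, does not close it.
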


\begin{proof}
We begin by proceeding as in the proof of Lemma \ref{lem:L2-estimate-oscillation}. We let $x_i$ and $x_e$ be points in $\Ga$ that minimize (resp. maximize) $q$ on $\Ga$ and, for 
$$
0<\si<r_i,
$$ define the two points in $y_i, y_e\in\pa\Om_\si$ by
$y_j=x_j-\si\nu(x_j)$, $j=i, e$. 
As already done, we obtain the inequality:
$$
\frac12\,(\rho_e-\rho_i) (\rho_e+\rho_i-2 \si)\le h(y_e)-h(y_i)+2M \si 
$$
for $0<\si< \min \left\lbrace  \frac{\rho_e+\rho_i }{2}, r_i \right\rbrace  $.
\par
Since $\Om_\si$ is convex, we can join each $y_j$ to $z$ by a segment and, since $\na h(z)=0$, we can write the identity:
$$
h(y_j)-h(z)=\int_0^1 (1-t)\,\frac{d^2h}{dt^2}(z+t (y_j-z))\,dt.
$$
Thus,
$$
|h(y_j)-h(z)|\le |y_j-z|^2\,|\na^2 h(z_j)| \le \rho_j^2\,|\na^2 h(z_j)|,
$$
where $z_j$ is some point in $\Om_\si$.
\par
Then, we apply the mean value property to $|\na^2 h|$ (in fact this is subharmonic) in the ball $B_\si(z_j)$ and obtain as done before that
$$
|h(y_j)-h(z)|\le \frac{\rho_j^2}{\sqrt{|B|}\,\si^{N/2}}\,\nr \na^2 h\nr_{2,\Om}.
$$
Therefore, we find the inequality 
$$
\frac12\,(\rho_e-\rho_i) (\rho_e+\rho_i-2 \si)\le \frac{\rho_i^2+\rho_e^2}{\sqrt{|B|}\,\si^{N/2}}\,\nr \na^2 h\nr_{2,\Om}+2M \si 
$$
for $0<\si< \min \left\lbrace  \frac{ \rho_e+\rho_i }{2} , r_i \right\rbrace $.
\par
Observing that $\rho_i^2 + \rho_e^2 \le \frac{5}{4} d_{\Om}^2$ we get that
$$
\rho_e - \rho_i \le \frac{1}{\si_0} \left[  \frac{\frac{5}{4} d_{\Om}^2}{\sqrt{|B|}\,\si^{N/2}}\,\nr \na^2 h\nr_{2,\Om}+2M \si \right]
$$
and we finally conclude as done in the proof of Lemma \ref{lem:L2-estimate-oscillation}.
\end{proof}

\begin{rem}
\label{rem:spectral lower bounds}
{\rm
(i) It is clear that Lemma \ref{lem:L2-estimate-oscillation-convex} still holds in a domain for which we can claim that $y_e$ can be joined to $z$ by a segment. We stress the fact that, instead, the point $y_i$ can always be joined to $z$ by a segment.
\par
(ii) As observed in Remark \ref{rem:eigenvalues}, the value $\ol{\mu}(\Om)$ can be bounded below by
the second Neumann eigenvalue $\mu_2(\Om)$ that, in turn, can be estimated by geometrical parameters or isoperimetric constants. In the case that $\Om$ is convex, estimates involving the diameter $d_\Om$ alone can be found in \cite{PW} and \cite{ENT}. In the case of a general Lipschitz bounded domain, a lower bound for $\mu_2 (\Om)$ involving the best isoperimetric constant relative to $\Om$ can be found in \cite{BCT}.
\par
(iii) A lower bound for $\mu_0(\Om)$ can be obtained as follows. For any $B_r(x_0) \subset \Om$, by the mean value property we have that
$$
\mu_0(\Om)=\inf\Biggl\{\int_\Om |\na v|^2\,dx: \int_\Om v^2 dx=1, \De v=0 \mbox{ in } \Om, \int\limits_{B_r(x_0)} v \, dx =0 \Biggr\}
$$
and clearly,
$$
\mu_0(\Om) \ge \inf\Biggl\{\int_\Om |\na v|^2\,dx: v \in W^{1,2}(\Om), \int_\Om v^2 dx=1, \int\limits_{B_r(x_0)} v \, dx =0 \Biggr\}.
$$
Here, the right-hand side is the reciprocal of the optimal constant in the following Poincar\'e-type inequality considered in \cite{Me}[Theorem 1]:
$$
\int_{\Om} v^2 \, dx \le C\,\int_{\Om} |\na v|^2 \, dx,
$$
that holds with
$$
C=(1+r^{-N/2}\sqrt{|\Om|/|B|})^2\,(1+\mu_2(\Om)^{-2})-1,
$$
for every $v\in W^{1,2}(\Om)$ that has null mean value on $B_r(x_0)$ ($C$ has been computed by using \cite[Theorem 1]{Me} and \cite[Theorem 3.3 and Example 3.5]{AMR}).
It is clear that 
$\mu_0 (\Om)\ge 1/\sqrt{C}$.
Notice that we can always choose $r=r_i$.
}
\end{rem}

\medskip

We conclude this section by presenting a simple method to estimate the number $M$ in a quite general domain. The following lemma results from a simple inspection and by the uniqueness for the Dirichlet problem.

\begin{lem}[Torsional creep in an annulus]\label{soluzioneincoronacircolare}
\label{lem:torsion-annulus}

Let $A=A_{r,R}\subset \RR^N$ be the annulus centered at the  origin and radii $0<r<R$,
and set $\ka=r/R$.
\par
Then, the solution $w$ of the Dirichlet problem 
\begin{equation*}
\label{torsion-annulus}
\Delta w = N \ \textrm{ in } \ A, \quad w = 0 \ \textrm{ on } \ \pa A,
\end{equation*}
is defined for $r\le |x|\le R$ by
\begin{equation*}
w(x) =
\begin{cases}
\displaystyle\frac12\, |x|^2 +\frac{R^2}{2}\,(1-\ka^2)\,\frac{\log(|x|/r)}{\log\ka} -\frac{r^2}{2} 
\ &\mbox{ for } \ N=2, 
\vspace{5pt} \\
\displaystyle\frac12\,|x|^2 +\frac12\,\frac{R^2}{1-\ka^{N-2}}\,\left\{ (1-\ka^2)\,(|x|/r)^{2-N}+\ka^N-1\right\} \ &\mbox{ for } \ N \ge 3.
\end{cases}
\end{equation*}
\end{lem}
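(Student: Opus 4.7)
The plan is to exploit the rotational symmetry of the problem. The Laplacian and the annulus $A_{r,R}$ are both invariant under rotations about the origin, and the Dirichlet problem $\Delta w = N$ in $A$ with $w=0$ on $\partial A$ admits a unique classical solution. Hence $w$ must itself be radial: $w(x) = \phi(|x|)$ for some $\phi \colon [r,R] \to \RR$. Writing the Laplacian in spherical coordinates, this reduces the PDE to the ordinary differential equation
\begin{equation*}
\phi''(\rho) + \frac{N-1}{\rho}\,\phi'(\rho) = N, \quad r < \rho < R,
\end{equation*}
with the two-point boundary condition $\phi(r) = \phi(R) = 0$.

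A particular solution is $\phi_p(\rho) = \tfrac{1}{2}\rho^2$, since $\Delta\bigl(\tfrac{1}{2}|x|^2\bigr) = N$. The associated homogeneous equation $\phi_h'' + \tfrac{N-1}{\rho}\phi_h' = 0$ integrates explicitly: its two linearly independent solutions are the constant function and $\log\rho$ when $N=2$, and the constant function and $\rho^{2-N}$ when $N\ge 3$. Thus the general radial solution takes the form $\phi(\rho) = \tfrac{1}{2}\rho^2 + a + b\log\rho$ for $N=2$, and $\phi(\rho) = \tfrac{1}{2}\rho^2 + a + b\,\rho^{2-N}$ for $N\ge 3$. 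The boundary conditions $\phi(r)=\phi(R)=0$ then yield a $2\times 2$ linear system in $a,b$, whose unique solution I would substitute back and rewrite in terms of the ratio $\kappa = r/R$ to recover the two formulas in the statement.

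The only real obstacle is algebraic bookkeeping: one must check that the constants produced by the boundary conditions can be packaged into the compact form displayed in the lemma. For $N\ge 3$ the nontrivial identity to verify is
\begin{equation*}
-r^2(1-\kappa^{N-2}) - R^2(1-\kappa^2) = R^2(\kappa^N - 1),
\end{equation*}
which follows immediately from $R^2\kappa^2 = r^2$ and $r^2\kappa^{N-2} = R^2\kappa^N$; the analogous check for $N=2$ is simpler still. No analytical difficulty is anticipated, and the formulas are then verified directly by inspection.
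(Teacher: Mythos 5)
Your proof is correct and takes essentially the same route the paper intends: the paper dismisses the lemma as following "from a simple inspection and by the uniqueness for the Dirichlet problem," which amounts to the same radial reduction and boundary-condition matching that you carry out explicitly (your algebraic identity for $N\ge 3$ checks out, and the formula does vanish at $|x|=r$ and $|x|=R$). If anything, your derivation is more detailed than the paper's one-line justification, but there is no substantive difference in method.
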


\medskip

\begin{thm}[A bound for the gradient on $\Ga$]
\label{thm:boundary-gradient}
Let $\Om\subset\RR^N$ be a bounded domain that satisfies the uniform interior and exterior conditions with radii $r_i$ and $r_e$ and let $u\in C^1(\ol{\Om})\cap C^2(\Om)$ be a solution
of \eqref{serrin1} in $\Om$.
\par
Then, we have that
\begin{equation}
\label{gradient-estimate}
r_i\le |\na u| \le 
c_N\,\frac{d_\Om(d_\Om+r_e)}{r_e} \ \mbox{ on } \ \Ga,
\end{equation}
where $d_\Om$ is the diameter of $\Om$ and 
$c_N=3/2$ for $N=2$ and $c_N=N/2$ for $N\ge 3$.
\end{thm}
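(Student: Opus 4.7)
The proof splits into a lower bound and an upper bound, each obtained by comparing $u$ with an explicit radial torsion function via the maximum principle and Hopf's lemma. For the \emph{lower bound} $|\na u| \ge r_i$, fix $p \in \Ga$ and use the interior sphere condition to pick $B := B_{r_i}(y) \subset \Om$ with $\ol B \cap \Ga = \{p\}$. The ball torsion $v(x) = \tfrac{1}{2}(|x-y|^2 - r_i^2)$ satisfies $\De v = N$ in $B$, $v \equiv 0$ on $\pa B$, and $v \le 0$ in $B$. Since $u$ is subharmonic with zero boundary data on $\Ga$, $u \le 0$ in $\ol\Om$; hence $v - u \ge 0$ on $\pa B$, and the maximum principle gives $v \ge u$ in $B$. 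Because $v(p) = u(p) = 0$, the harmonic function $v - u$ attains its minimum $0$ at the boundary point $p$, and Hopf's lemma yields $u_\nu(p) \ge v_\nu(p) = r_i$ (using $\nu(p) = (p-y)/r_i$). Since $\na u$ is parallel to $\nu$ on $\Ga$, this is the desired bound.

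For the \emph{upper bound}, fix $p \in \Ga$, let $y^* = p + r_e\,\nu(p)$ be the center of the exterior touching ball, and set $R = r_e + d_\Om$. The triangle inequality $|x-y^*| \le |x-p| + r_e \le d_\Om + r_e = R$ for $x \in \ol\Om$ gives $\Om \subset A := B_R(y^*) \setminus \ol{B_{r_e}(y^*)}$. Let $w$ be the torsion function of $A$ furnished by Lemma \ref{soluzioneincoronacircolare}: it is radial about $y^*$, $\De w = N$ in $A$, $w \equiv 0$ on $\pa A$, and $w \le 0$ in $\ol A$. Thus $w \le 0 = u$ on $\Ga$, and since $w - u$ is harmonic in $\Om$ the maximum principle gives $w \le u$ in $\Om$. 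As $w(p) = u(p) = 0$, the maximum of $w - u$ is attained at $p$, and Hopf's lemma yields $u_\nu(p) \le w_\nu(p) = -w'(r_e)$; the sign reflects that $\nu(p)$ points from $p$ radially inward in $A$, toward $y^*$.

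Differentiating the explicit formula of Lemma \ref{soluzioneincoronacircolare} and using $R^2 - r_e^2 = d_\Om(R+r_e)$ yields
\[
-w'(r_e) = \frac{d_\Om(R+r_e)}{2 r_e \log(R/r_e)} - r_e \ (N=2), \quad -w'(r_e) = \frac{(N-2)\,d_\Om(R+r_e)}{2 r_e (1-\kappa^{N-2})} - r_e \ (N \ge 3),
\]
where $\kappa = r_e/R$. For $N = 2$, the elementary inequality $\log(1+x) \ge 2x/(2+x)$ applied at $x = d_\Om/r_e$ absorbs the logarithm and, after routine algebra, produces $-w'(r_e) \le d_\Om(d_\Om+4r_e)/(4r_e) \le \tfrac{3}{2}\,d_\Om(d_\Om+r_e)/r_e$. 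For $N \ge 3$, one estimates $(1-\kappa^{N-2})^{-1}$ from above using inequalities of the form $1 - \kappa^{N-2} \ge (\mathrm{const})\,(1-\kappa) = (\mathrm{const})\,d_\Om/R$ and combines the result with $R+r_e \le 2R$ to convert the expression into the form $c_N\,d_\Om(d_\Om+r_e)/r_e$ with $c_N = N/2$. \textbf{Main obstacle:} the maximum principle and Hopf comparisons are essentially routine once the correct radial barriers have been identified; the bulk of the work lies in this last step, where one must choose elementary inequalities that absorb the factor $(1-\kappa^{N-2})^{-1}$ without losses that scale badly in $N$, so as to deliver the precise stated constant $c_N$.
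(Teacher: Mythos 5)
Your barrier construction is the same as the paper's: compare $u$ with the torsion function of the inscribed ball for the lower bound, and with the torsion function of an annulus $A = B_R(y^*) \setminus \ol{B_{r_e}(y^*)}$ (provided by Lemma~\ref{soluzioneincoronacircolare}) for the upper bound, then read off the normal derivative at the tangency point $p$. The lower bound and the $N=2$ computation are correct; your elementary inequality $\log(1+x)\ge 2x/(2+x)$ is in fact more explicit than the paper, which only states that ``tedious calculations'' give $\sup f = 3/2$.

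The $N\ge 4$ step, however, has a genuine gap. After using $R^2-r_e^2 = d_\Om(R+r_e)$ you correctly arrive at
\[
-w'(r_e) = \frac{(N-2)\,d_\Om(R+r_e)}{2\,r_e\,(1-\ka^{N-2})} - r_e, \qquad \ka = r_e/R ,
\]
but your plan to bound $(1-\ka^{N-2})^{-1}$ by $(\mathrm{const})\cdot(1-\ka)^{-1} = (\mathrm{const})\cdot R/d_\Om$ cannot work. Since $1-\ka^{N-2} = (1-\ka)\sum_{k=0}^{N-3}\ka^k$ and $\sum_{k=0}^{N-3}\ka^k \to 1$ as $\ka\to 0$, the only admissible constant for a uniform lower bound $1-\ka^{N-2}\ge c\,(1-\ka)$ is $c\le 1$. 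With $c=1$, the first term becomes $\frac{(N-2)(R+r_e)R}{2r_e}$, which cancels the $d_\Om$ factor entirely, and subtracting $r_e$ leaves $(N-3)r_e + O(d_\Om)$ --- a quantity of order $r_e$ as $d_\Om\to 0$, not of order $d_\Om$ as the target $c_N\,d_\Om(d_\Om+r_e)/r_e$ requires. The desired bound relies on a cancellation between the two terms that is tight as $\ka\to 1$ (there $\sum\ka^k\approx N-2$, not $1$), and a single fixed $c$ cannot capture both regimes. (For $N=3$ no approximation is made since $1-\ka^{N-2}=1-\ka$ exactly, which is why your outline does work there.) The paper avoids this by keeping everything exact: it rewrites $w_\nu(p)$ as $\frac{R(R-r_e)}{r_e}\,f(\ka)$, with $f$ as in \eqref{def-f}, so that the factor $R-r_e\le d_\Om$ is manifest, and then only needs the qualitative fact that $f$ extends continuously to $[0,1]$ with $\sup_{[0,1]} f = N/2$.
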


\begin{proof}
We first prove the first inequality in \eqref{gradient-estimate}. Fix any  $p\in\Ga$.
Let $B=B_{r_i}$ be the interior ball touching $\Ga$ at $p$ and place the origin of cartesian axes at the center of $B$.
\par
If $w$ is the solution of \eqref{serrin1} in $B$, that is $w(x)= (|x|^2-r_i^2)/2$, by comparison we have that $w\ge u$ on $\ol{\Om}$ and hence, since $u(p)=w(p)=0$, we obtain:
$$
u_\nu(p)\ge w_\nu(p)=r_i.
$$
\par
To prove the second inequality, we place the origin of axes at the center of  the exterior ball $B=B_{r_e}$ touching $\Ga$ at $p$. Denote by $A$ the smallest annulus containing $\Om$, concentric with $B$ and having $\pa B$ as internal boundary and let $R$ be the radius of its external boundary.
\par
If $w$ is the solution of \eqref{serrin1} in $A$, by comparison we have that $w\le u$ on $\ol{\Om}$.
Moreover, since $u(p)=w(p)=0$, we have that
$$
u_\nu(p)\le w_\nu(p).
$$
By Lemma \ref{lem:torsion-annulus} we then compute that
$$
w_\nu(p) =\frac{R (R-r_e)}{r_e}\,f(\ka)
$$
where, for $0<\ka<1$,
\begin{equation}
\label{def-f}
f(\ka)=
\begin{cases}
\displaystyle \frac{2\ka^2\log(1/\ka)+\ka^2-1}{2(1-\ka) \log(1/\ka)}
\ &\mbox{ for } \ N=2, 
\vspace{5pt} \\
\displaystyle\frac{2 \ka^N-N \ka^2+N-2}{2(1-\ka)(1-\ka^{N-2})} \ &\mbox{ for } \ N \ge 3.
\end{cases}
\end{equation}
Notice that $f$ is bounded since it can be extended to a continuous function on $[0,1]$.
Tedious calculations yield that 
$$
\sup_{0<\ka<1} f(\ka)=
\begin{cases}
\frac32 \ &\mbox{ for } \ N=2, \\
\frac{N}2 \ &\mbox{ for } \ N\ge 3.
\end{cases}
$$
Finally, observe that $R\le d_\Om+r_e$.
\end{proof}

\begin{rem}
{\rm
To the best of our knowlwdge, inequality \eqref{gradient-estimate} is not 
present in the literature for general smooth domains and is not sharp.  
Other estimates are given in \cite{PP} for planar strictly convex domains (but the same argument can be generalized to general dimension for strictly mean convex domains) and in \cite{CM} for strictly mean convex domains in genearal dimension.
In particular, in \cite[Lemma 2.2]{CM} the authors prove that there exists a universal constant $c_0$ such that
\begin{equation}
\label{NUOVA maggiorgrad}
| \na u | \leq c_0 | \Om |^{1/N} \mbox{ in } \ol{\Om}.
\end{equation}
 
\par
Since the focus of this paper is not on the sharpness of constants, we chose to present the elementary proof of Theorem \ref{thm:boundary-gradient}.
}
\end{rem}

\section{Stability for the Soap Bubble Theorem \\
and some overdetermined problems}
\label{sec:stability}

In this section, we collect our results on the stability of the spherical configuration by putting together the identities derived in Section \ref{sec:SBT} and the estimates obtained in Section \ref{sec:estimates}.
\par
It is clear that, we may replace $\nr H_0-H\nr_{1,\Ga}$ by  the weaker deviation
$$
\int_\Ga (H_0-H)^+\,dS_x,
$$
in all the relevant formulas in the sequel.
\par
We begin with our main result.
\begin{thm}[General stability for the Soap Bubble Theorem]
\label{thm:SBT-stability}
Let $\Ga$ be the connected boundary of class $C^{2,\al}$, $0<\al<1$, of a bounded domain $\Om\subset\RR^N$, $N\ge 2$. Denote by $H$ its mean curvature function and let $H_0$
be the constant defined in \eqref{R and H_0}.
\par
There is a point $z\in\Om$ such that
\begin{enumerate}[(i)]
\item
if $N=2$ or $N=3$,  there exixts a positive constant $C$ such that
\begin{equation}
\label{general-stability-Lipschitz}
\rho_e-\rho_i\le C\,\nr H_0-H\nr_{1,\Ga}^{1/2};
\end{equation}
\item
If $N\ge 4$, there exist two positive constants $C$ and $\ve$ such that \begin{equation}
\label{general-stability}
\rho_e-\rho_i\le C\,\nr H_0-H\nr_{1,\Ga}^{1/(N+2)} \quad \mbox{ if } \quad \nr H_0-H\nr_{1,\Ga}<\ve.
\end{equation}
\end{enumerate}
\par
The constants $C$ and $\ve$ depend on the dimension $N$, the geometrical quantities $|\Om|$, $d_\Om$, $r_e$, $r_i$, the spectral parameter $\mu_0(\Om)$ defined in \eqref{inf-harmonic-poincare2} and in the case (i) also on the immersion constant $c$ introduced in Theorem \ref{thm:W22-stability}. Their explicit expression are given in \eqref{def-C-Lipschitz} and \eqref{def-C-eps}.
\end{thm}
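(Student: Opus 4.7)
The plan is to combine the inequality \eqref{fundamental-stability} with Theorem \ref{thm:W22-stability} and the gradient bound of Theorem \ref{thm:boundary-gradient}. Since the solution $u$ of \eqref{serrin1} vanishes on $\Ga$ and satisfies $\De u = N>0$, by the maximum principle it attains an interior minimum; I would select such a point $z\in\Om$ and set $q(x)=\frac12(|x-z|^2-a)$ for any $a\in\RR$ and $h=q-u$, so that $h$ is harmonic in $\Om$ and $\na h(z)=0$. The identity \eqref{L2-norm-hessian} then rewrites the left-hand side of \eqref{fundamental-stability} as $(N-1)^{-1}\nr\na^2 h\nr_{2,\Om}^2$. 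Using the pointwise bound $(H_0-H)(u_\nu)^2\le M^2 (H_0-H)^+$ on $\Ga$, where $M=\max_\Ga u_\nu$ is controlled by $d_\Om$ and $r_e$ via Theorem \ref{thm:boundary-gradient}, and the trivial inequality $(H_0-H)^+\le|H_0-H|$, I would arrive at the clean intermediate estimate
\[
\nr\na^2 h\nr_{2,\Om}^2 \;\le\; (N-1)\,M^2\,\nr H_0-H\nr_{1,\Ga}.
\]

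With this in hand, applying Theorem \ref{thm:W22-stability} closes the argument. In the cases $N=2,3$ the theorem supplies the linear bound $\rho_e-\rho_i\le C\,\nr\na^2 h\nr_{2,\Om}$, and substituting the intermediate estimate immediately yields \eqref{general-stability-Lipschitz} with exponent $1/2$. For $N\ge 4$ the theorem instead provides $\rho_e-\rho_i\le C\,\nr\na^2 h\nr_{2,\Om}^{2/(N+2)}$ subject to the smallness hypothesis \eqref{W22-smallness}; substituting the intermediate estimate produces \eqref{general-stability} with exponent $1/(N+2)$, while \eqref{W22-smallness} translates, through the same intermediate estimate, into an explicit smallness threshold $\ve$ on $\nr H_0-H\nr_{1,\Ga}$.

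The only real obstacle is the bookkeeping of constants. I would need to carefully trace how the constants from Theorem \ref{thm:boundary-gradient} (controlling $M$) and Theorem \ref{thm:W22-stability} (controlling the oscillation through $\mu_0(\Om)$, $r_i$, $|\Om|$, and, when $N=2,3$, the Sobolev immersion constant $c$) combine into the explicit expressions for $C$ and $\ve$ promised in \eqref{def-C-Lipschitz} and \eqref{def-C-eps}. Beyond this housekeeping, no new analytic ideas are required: the analytical content is already packaged in the cited results, and the present theorem just stitches them together.
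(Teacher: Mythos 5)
Your proposal is correct and follows essentially the same route as the paper: choose $z$ as an interior minimum of $u$, rewrite the Cauchy--Schwarz deficit as $\nr\na^2 h\nr_{2,\Om}^2/(N-1)$, bound it by $M^2\nr H_0-H\nr_{1,\Ga}$ using \eqref{fundamental-stability} (the paper starts from \eqref{H-fundamental}, which yields the same bound after dropping the non-negative second summand), and then invoke Theorem \ref{thm:W22-stability} together with the gradient bound of Theorem \ref{thm:boundary-gradient}. The only remaining work is the explicit constant-tracking leading to \eqref{def-C-Lipschitz} and \eqref{def-C-eps}, as you correctly note.
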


\begin{proof}
Let $u$ be the solution of \eqref{serrin1} and let $z\in\Om$ be any local minimum point of $u$ in $\Om$.
Set $h=q-u$, where $q$ is given by \eqref{quadratic}. From \eqref{H-fundamental} and \eqref{L2-norm-hessian}, we infer that
\begin{equation}
\label{provaperosservazione}
\nr \na^2 h\nr_{2,\Om}\le M\,\sqrt{N-1}\,\nr H-H_0\nr_{1,\Ga}^{1/2}
\end{equation}
\par
If $N=2$ or $N=3$,  by \eqref{Lipschitz-stability} and \eqref{gradient-estimate} we obtain \eqref{general-stability-Lipschitz}
at once with
\begin{equation}
\label{def-C-Lipschitz}
C= k_N \, c \, \frac{d_\Om^\ga}{|\Om|^{ \frac{1}{N} }} \, \frac{1+\mu_0(\Om)}{\mu_0(\Om)} \, \frac{d_{\Om} (d_{\Om} +r_e)}{r_e} .
\end{equation}
 \par
When $N\ge 4$, if \eqref{W22-smallness} holds, then \eqref{W22-stability} informs us that
\begin{multline*}
\rho_e-\rho_i\le \frac{a_N\, M^{N/(N+2)}}{\mu_0(\Om)^{2/(N+2)}|\Om|^{1/N}}\, \nr \na^2 h\nr_{2,\Om}^{2/(N+2)}\le \\
\frac{a_N\, (N-1)^{1/(N+2)}}{\mu_0(\Om)^{2/(N+2)}|\Om|^{1/N}}\, M\,\nr H-H_0\nr_{1,\Ga}^{1/(N+2)} ,
\end{multline*}
where $a_N$ is the constant defined in \eqref{provacondefcostaN}.
Thus, there are constants $k_N$ and $\al_N$ such that \eqref{general-stability} holds with
\begin{equation}
\label{def-C-eps}
C= k_N\, \frac{d_\Om(d_\Om+r_e)}{ \mu_0(\Om)^{ \frac{2}{N+2} }|\Om|^{ \frac{1}{N} } \,r_e} \quad \mbox{ and } \quad \ve= \al_N \, \mu_0(\Om)^2 \, r_{i}^{N+2},
\end{equation}
by \eqref{gradient-estimate}.
\end{proof}

\begin{rem}
\label{rem:SBT-stability-neumann}
{\rm
(i) The distance of a minimum point of $u$ from $\Ga$ may be estimated from below, in terms of geometrical and spectral parameters, by following the arguments contained in \cite{BMS}. 
\par
(ii)
Another version of Theorem \ref{thm:SBT-stability} can be stated if we assume that $\Om$ contains its center of mass. The proof runs similarly. In fact, it suffices to use Theorem \ref{th:W22-stability-neumann} instead of Theorem \ref{thm:W22-stability}. In this way, we simply obtain the constants given in \eqref{def-C-Lipschitz} and \eqref{def-C-eps}, with $\mu_{0}(\Om)$ replaced by $\ol{\mu}(\Om)$. 
Remark \ref{rem:eigenvalues} then informs us that such constants are slightly better.
\par
(iii) In \eqref{general-stability}, the assumption that $\nr H_0-H\nr_{1,\Ga}<\ve$ may leave the impression that (ii) of Theorem \ref{thm:SBT-stability} is not a \textit{global} stability result. However, if $\nr H_0-H\nr_{1,\Ga}\ge\ve$, it is a trivial matter to obtain an upper bound for $\rho_e - \rho_i$ in terms of $\nr H_0-H\nr_{1,\Ga}$.
}
\end{rem}

Since the estimate in Theorem \ref{thm:SBT-stability} does not depend on the particular minimum point chosen, as a corollary, we obtain a result of closeness to a union of balls.

\begin{cor}[Closeness to an aggregate of balls]
\label{cor:SBT-stability}
Let $\Ga$, $H$, and $H_0$ be as in Theorem \ref{thm:SBT-stability}.
\par
Then, there exist points $z_1, \dots, z_n$ in $\Om$, $n\ge 1$, and corresponding numbers
\begin{equation}
\label{def-rho_j}
\rho_i^j=\min_{x\in\Ga}|x-z_j| \ \mbox{ and } \ \rho_e^j=\min_{x\in\Ga}|x-z_j|,
\quad j=1, \dots, n,
\end{equation}
such that
\begin{equation}
\label{aggregate}
\bigcup_{j=1}^n B_{\rho_i^j}(z_j)\subset\Om\subset \bigcap_{j=1}^n B_{\rho_e^j}(z_j)
\end{equation}
and
$$
\max_{1\le j\le n}(\rho_e^j-\rho_i^j)\le C\,\nr H_0-H\nr_{1,\Ga}^{1/2},
$$
if $N=2$ or $N=3$, and
$$
\max_{1\le j\le n}(\rho_e^j-\rho_i^j)\le C\,\nr H_0-H\nr_{1,\Ga}^{1/(N+2)} 
\quad \mbox{ if } \quad \nr H_0-H\nr_{1,\Ga}<\ve,
$$
if $N\ge 4$. Here, the relevant constants are those in \eqref{def-C-Lipschitz}  and \eqref{def-C-eps}.
\par
The number $n$ can be chosen as the number of connected components of the set $\cM$ of all the local minimum points of the solution $u$ of \eqref{serrin1}.
\end{cor}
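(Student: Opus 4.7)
The plan is to leverage the uniformity of the constants in Theorem \ref{thm:SBT-stability} with respect to the choice of the center $z$. Inspecting that proof, $z$ is selected as an \emph{arbitrary} (local) minimum point of the torsion function $u$ solving \eqref{serrin1}, and I would first verify that the constants $C$ and $\ve$ produced there do not actually depend on which minimum point is chosen. This rests on two observations: (i) since $\na^2 q=I$, the Hessian $\na^2 h=I-\na^2 u$ is independent of both $z$ and $a$ in \eqref{quadratic}, so the bound
\[
\nr\na^2 h\nr_{2,\Om}\le M\sqrt{N-1}\,\nr H-H_0\nr_{1,\Ga}^{1/2}
\]
obtained from \eqref{H-fundamental} and \eqref{L2-norm-hessian} is manifestly $z$-independent; and (ii) in Lemma \ref{thm:W22-stability} the center $z$ enters only through the relation $\na h(z)=0$, a property satisfied at every critical, hence at every local minimum, point of $u$.

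Next, I would invoke the strong maximum principle to conclude that $u<0$ in $\Om$ and $u=0$ on $\Ga$, so the set $\cM$ of local minimum points of $u$ is a nonempty subset of $\Om$. Letting $\cM_1,\dots,\cM_n$ denote its connected components, I would pick one representative $z_j\in\cM_j$ for each $j$ and define $\rho_i^j$ and $\rho_e^j$ via \eqref{def-rho_j}, reading $\rho_e^j=\max_{x\in\Ga}|x-z_j|$ (the statement contains an evident typo in the definition of $\rho_e^j$). The inclusions $B_{\rho_i^j}(z_j)\subset\Om\subset B_{\rho_e^j}(z_j)$ then hold trivially: the first by the very definition of $\rho_i^j$ as the distance from $z_j$ to $\Ga$, the second because the continuous function $x\mapsto|x-z_j|$ attains its maximum over the compact set $\ol{\Om}$ at some boundary point. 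Taking the union of the inner inclusions and the intersection of the outer ones over $j$ yields \eqref{aggregate} at once.

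Finally, applying Theorem \ref{thm:SBT-stability} once for each choice $z=z_j$ delivers
\[
\rho_e^j-\rho_i^j\le C\,\nr H_0-H\nr_{1,\Ga}^{\tau_N}
\]
for every $j$, with the common constant $C$ of \eqref{def-C-Lipschitz} or \eqref{def-C-eps} and, when $N\ge 4$, under the single smallness condition $\nr H_0-H\nr_{1,\Ga}<\ve$. Taking the maximum over $j$ completes the estimate. The main obstacle I anticipate is the bookkeeping verification of uniformity in $z$ throughout the chain of estimates that culminate in Theorem \ref{thm:SBT-stability}: once this uniformity is secured, the corollary reduces to a one-line assembly of the individual bounds, with no further analytic input.
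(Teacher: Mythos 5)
Your proposal is correct and follows the same route as the paper: pick one representative from each connected component of the set of local minima and apply Theorem \ref{thm:SBT-stability} to each. The paper dispatches this in two lines; your additional checks (the $z$-independence of $\nr\na^2 h\nr_{2,\Om}$ and of the constants, the obvious inclusions, and the noted typo $\min\mapsto\max$ in the definition of $\rho_e^j$) are the right bookkeeping but do not constitute a genuinely different argument.
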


\begin{proof}
Let $\cM_j$, $j=1,\dots, n$, be the connected components of $\cM$ and pick one point $z_j$ from
each $\cM_j$. By applying Theorem \ref{thm:SBT-stability} to each $z_j$, the conclusion is then evident.
\end{proof}

\begin{rem}
{\rm

The estimates presented in Theorem \ref{thm:SBT-stability} and sketched in (ii) of Remark \ref{rem:SBT-stability-neumann}, may be interpreted as stability estimates, once some a priori information is available: here, we just illustrate the case (ii) of Theorem \ref{thm:SBT-stability}. 
Given four positive constants $d, r, V,$ and $\mu$, let $\cS=\cS(d, r, V,\mu)$ be the class of connected surfaces $\Ga\subset\RR^N$ of class $C^{2,\al}$, where $\Ga$ is the boundary of a bounded domain $\Om$, such that 
$$
d_{ \Om} \le d, \quad r_i(\Om), r_e(\Om)\ge r, \quad |\Om|\ge V, \quad \mu_0(\Om)\ge\mu.
$$
Then, for every $\Ga\in\cS$ with $\nr H_0-H\nr_{1,\Ga}< \ve $, we have that
$$
\rho_e-\rho_i\le C\, \nr H_0-H\nr_{1,\Ga}^{1/(N+2)},
$$
where $C$ and $\ve$ are the constants in \eqref{def-C-eps}, with the relevant parameters replaced by 
the constants $d, r, V,\mu$.
\par
If we relax the a priori assumption that $\Ga\in\cS$, it may happen that, 
as the deviation $\nr H_0 - H\nr_{1,\Ga}$ tends to $0$, $\Om$ tends to the ideal configuration of two or more mutually tangent balls, while $\ve$ tends to $0$ and $C$ diverges since $r$ tends to $0$. This behavior can be avoided by considering strictly mean convex surfaces, as done in \cite{CM} by using the uniform deviation $\nr H_0 - H\nr_{\infty,\Ga}$.
}
\end{rem}

\medskip

If we suppose that $\Ga$ is strictly mean convex, then we can use Theorem \ref{th:heintze-karcher} to 
to obtain a stability result for Heintze-Karcher inequality and we can improve the constants $C$ and $\ve$ in \eqref{general-stability}.

\begin{thm}[Stability for Heintze-Karcher's inequality]
\label{thm:stability-hk}
Let $\Ga$ be the connected boundary of class $C^{2,\al}$, $0<\al<1$, of a bounded domain $\Om\subset\RR^N$, $N\ge 2$. Denote by $H$ its mean curvature function and suppose that $H>0$ on $\Ga$.
\par
There is a point $z\in\Om$ such that
\begin{enumerate}[(i)]
\item
if $N=2$ or $N=3$,  there exixts a positive constant $C$ such that
\begin{equation}
\label{stability-hk-Lip23}
\rho_e-\rho_i\le C\, \left(\int_\Ga\frac{dS_x}{H}-N\,|\Om|\right)^{1/2} ;
\end{equation}
\item
If $N\ge 4$, there exist two positive constants $C$ and $\ve$ such that
\begin{equation}
\label{stability-hk}
\rho_e-\rho_i\le C\,\left(\int_\Ga\frac{dS_x}{H}-N\,|\Om|\right)^{1/(N+2)},
\end{equation}
if 
$$
\left( \int_\Ga\frac{dS_x}{H}-N\,|\Om| \right) <\ve.
$$
\end{enumerate}
\par
The relevant constants will be given in \eqref{def-C-hk-Lipschitz} and \eqref{def-C-eps-hk}.
\end{thm}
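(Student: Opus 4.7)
The plan is to mirror the proof strategy of Theorem \ref{thm:SBT-stability}, but taking as starting point the Heintze-Karcher identity \eqref{heintze-karcher-identity} of Theorem \ref{th:heintze-karcher} in place of the $H_0$-based identity \eqref{H-fundamental}. As in the program outlined in the Introduction, I would rewrite the Cauchy-Schwarz deficit for $\na^2 u$ via \eqref{L2-norm-hessian} as $\frac{1}{N-1}\nr \na^2 h\nr_{2,\Om}^2$, where $h = q - u$ and $q$ is the quadratic polynomial in \eqref{quadratic}. To force the parameter $z$ of $q$ to lie inside $\Om$ and to secure $\na h(z) = 0$, I would choose $z$ to be any local minimum point of $u$ in $\ol{\Om}$, exactly as in the proof of Theorem \ref{thm:SBT-stability}.

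The second summand on the left-hand side of \eqref{heintze-karcher-identity}, namely $\int_\Ga (1-H u_\nu)^2/H\,dS_x$, makes sense because $H>0$ on $\Ga$ by the strict mean-convexity assumption, and is manifestly non-negative. Discarding it and combining with \eqref{L2-norm-hessian} at once yields
\begin{equation*}
\nr \na^2 h\nr_{2,\Om}^2 \le (N-1)\left(\int_\Ga \frac{dS_x}{H} - N|\Om|\right).
\end{equation*}
This plays the role of \eqref{provaperosservazione} in the proof of Theorem \ref{thm:SBT-stability}; crucially, no factor of the type $M^2=\max_\Ga u_\nu^2$ appears on the right-hand side, because the quantity dropped to obtain the inequality is a purely geometric defect and carries no $u_\nu^2$ weight.

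To conclude, I would feed this inequality into Theorem \ref{thm:W22-stability}. For $N = 2, 3$, \eqref{Lipschitz-stability} immediately produces \eqref{stability-hk-Lip23} with
\begin{equation*}
C = k_N\, c\, \sqrt{N-1}\; \frac{d_\Om^\ga}{|\Om|^{1/N}}\; \frac{1 + \mu_0(\Om)}{\mu_0(\Om)}.
\end{equation*}
For $N \ge 4$, \eqref{W22-stability} under the smallness condition \eqref{W22-smallness} yields \eqref{stability-hk} with
\begin{equation*}
C = k_N\,\frac{M^{N/(N+2)}\,(N-1)^{1/(N+2)}}{\mu_0(\Om)^{2/(N+2)}\,|\Om|^{1/N}}, \qquad \ve = \frac{\al_N^2\, M^2\, \mu_0(\Om)^2\, r_i^{N+2}}{N-1},
\end{equation*}
once the smallness of $\nr \na^2 h\nr_{2,\Om}$ is rephrased, by squaring the displayed inequality above, as smallness of $\int_\Ga dS_x/H - N|\Om|$. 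The residual factor $M$ is then controlled by Theorem \ref{thm:boundary-gradient} to give constants depending only on $N$, $|\Om|$, $d_\Om$, $r_e$, $r_i$ and $\mu_0(\Om)$.

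The main obstacle, modest as it is, will be the bookkeeping required to show that the resulting constants in \eqref{def-C-hk-Lipschitz} and \eqref{def-C-eps-hk} are strictly better than their counterparts in \eqref{def-C-Lipschitz} and \eqref{def-C-eps}: the gain amounts to one power of $M$ (equivalently, one power of $d_\Om(d_\Om+r_e)/r_e$) and reflects the substitution of the gradient-dependent integrand $(H_0-H)u_\nu^2$ by the gradient-free integrand $(1-Hu_\nu)^2/H$. No new PDE ingredient beyond what has been developed in Sections \ref{sec:SBT} and \ref{sec:estimates} is required.
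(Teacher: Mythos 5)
Your proposal reproduces the paper's own proof: it takes the same point $z$, derives the same key inequality $\nr\na^2 h\nr_{2,\Om}^2\le (N-1)\bigl(\int_\Ga H^{-1}\,dS_x - N|\Om|\bigr)$ by dropping the nonnegative second summand in \eqref{heintze-karcher-identity}, and then feeds it into Theorem \ref{thm:W22-stability} exactly as the paper does, yielding the constants in \eqref{def-C-hk-Lipschitz} and \eqref{def-C-eps-hk} up to absorbing dimensional factors into $k_N$ and $\al_N$. The only difference is cosmetic: the paper's final remark additionally points out that $M$ can be removed from both $C$ and $\ve$ by combining the bound \eqref{NUOVA maggiorgrad} with the first (lower) inequality of \eqref{gradient-estimate}, which you gesture at but do not spell out.
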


\begin{proof}
We chose the point $z$ in $\Om$ as in the proof of Theorem \ref{thm:SBT-stability}.
Moreover, by \eqref{heintze-karcher-identity} and \eqref{heintze-karcher}, we have that 
$$
\frac1{N-1}\,\int_\Om |\na^2 h|^2\,dx\le \int_\Ga\frac{dS_x}{H}-N\,|\Om|.
$$
We then proceed as in the proof of Theorem \ref{thm:SBT-stability} and obtain \eqref{stability-hk-Lip23} with
\begin{equation}
\label{def-C-hk-Lipschitz}
C= k_N \, c \, \frac{d_\Om^\ga}{|\Om|^{ \frac{1}{N} }} \, \frac{1+\mu_0(\Om)}{\mu_0(\Om)} ,
\end{equation}
if $N=2$ or $N=3$, with the help of \eqref{Lipschitz-stability}, and \eqref{stability-hk} with
\begin{equation}
\label{def-C-eps-hk}
C= k_N \, \frac{ M^{ \frac{N}{N+2} } }{ \mu_0(\Om)^{ \frac{2}{N+2} } \, |\Om|^{ \frac{1}{N} } } 
\quad \mbox{ and }  \quad
\ve= \al_N \, \mu_0(\Om)^2 \, M^2 \, r_{i}^{N+2} ,
\end{equation}
with the help of \eqref{W22-stability}, \eqref{W22-smallness}.

To avoid the presence of $M$ in the constants $C$ and $\ve$, we can use respectively \eqref{NUOVA maggiorgrad} (obviously we could also use again the second inequality in \eqref{gradient-estimate} as before) and the first inequality in \eqref{gradient-estimate} and choose
\begin{equation*}
C= k_N \, \mu_0(\Om)^{-\frac{2}{N+2} } \, |\Om|^{-\frac{2}{N(N+2)} } \quad \mbox{ and } \quad
\ve = \al_N \, \mu_0(\Om)^2 \, r_{i}^{N+4}.
\end{equation*}
\end{proof}

The following theorem is in the spirit of the main result contained in \cite{CV} (see also \cite{CM}).

\begin{thm}[Stability for strictly mean convex hypersurfaces]
\label{thm:SBT-stability-mean-convex}
Let $\Ga$ be the connected boundary of class $C^{2,\al}$, $0<\al<1$, of a bounded domain $\Om\subset\RR^N$, $N\ge 2$. Denote by $H$ its mean curvature function, suppose that there exists a constant $\ul{H}>0$ such that $H\ge\ul{H}$ on $\Ga$, and let $H_0$ be the constant defined in \eqref{R and H_0}.
\par
There is a point $z\in\Om$ such that
\begin{enumerate}[(i)]
\item
if $N=2$ or $N=3$,  there exixts a positive constant $C$ such that
\begin{equation}
\label{SBT-stability-mean-convex-Lip23}
\rho_e-\rho_i\le C\, \nr H_0-H\nr_{\infty,\Ga}^{1/2} ;
\end{equation}
\item
If $N\ge 4$, there exist two positive constants $C$ and $\ve$ such that
\begin{equation}
\label{SBT-stability-mean-convex}
\rho_e-\rho_i\le C\,\nr H_0-H\nr_{\infty,\Ga}^{1/(N+2)},
\end{equation}
if 
$$
\nr H_0-H\nr_{\infty,\Ga}<\ve .
$$
\end{enumerate}
\par
The  relevant constants will be given in \eqref{def-C-Lip-mc} \eqref{def-C-eps-mc}.
\end{thm}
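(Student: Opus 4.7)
The strategy is to combine the Heintze--Karcher identity \eqref{heintze-karcher-identity}, which is available because of the strict mean convexity $H\ge \underline{H}>0$, with the harmonic-function machinery of Theorem \ref{thm:W22-stability}, in the same spirit as the proof of Theorem \ref{thm:stability-hk}. The key observation is that under this hypothesis the deficit on the right-hand side of \eqref{heintze-karcher-identity} is controlled directly by $\nr H_0-H\nr_{\infty,\Ga}$, with no need for the positive-part splitting used in Theorem \ref{thm:SBT-stability}, and with an extra factor $1/\underline{H}$ arising naturally.

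First, I would pick $z\in\Om$ to be a local minimum point of the solution $u$ of \eqref{serrin1} and set $h=q-u$ with $q$ the paraboloid \eqref{quadratic} centered at that $z$. Recalling \eqref{L2-norm-hessian} and discarding the non-negative second term on the left of \eqref{heintze-karcher-identity}, I get the master inequality
$$
\frac{1}{N-1}\int_{\Om}|\na^2 h|^2\,dx\;\le\;\int_{\Ga}\frac{dS_x}{H}-N|\Om|.
$$
Second, using the definition $H_0=|\Ga|/(N|\Om|)$ from \eqref{R and H_0}, I would rewrite the right-hand side as
$$
\int_{\Ga}\frac{dS_x}{H}-N|\Om|=\int_{\Ga}\!\Bigl(\frac{1}{H}-\frac{1}{H_0}\Bigr)dS_x=\int_{\Ga}\frac{H_0-H}{H_0\,H}\,dS_x\;\le\;\frac{|\Ga|}{H_0\,\underline{H}}\,\nr H_0-H\nr_{\infty,\Ga},
$$
where the Heintze--Karcher inequality \eqref{heintze-karcher} guarantees that this quantity is non-negative, so the bound is meaningful. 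Chaining the two displays gives
$$
\nr\na^2 h\nr_{2,\Om}\;\le\;\Bigl(\tfrac{(N-1)|\Ga|}{H_0\,\underline{H}}\Bigr)^{1/2}\nr H_0-H\nr_{\infty,\Ga}^{1/2}.
$$

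Third, I would feed this bound into Theorem \ref{thm:W22-stability}: for $N=2,3$ the Lipschitz estimate \eqref{Lipschitz-stability} yields \eqref{SBT-stability-mean-convex-Lip23}, while for $N\ge 4$ the power bound \eqref{W22-stability} under the smallness condition \eqref{W22-smallness} yields \eqref{SBT-stability-mean-convex}, with the corresponding smallness condition on $\nr H_0-H\nr_{\infty,\Ga}$. The explicit constants $C$ and $\ve$ in \eqref{def-C-Lip-mc} and \eqref{def-C-eps-mc} are then read off by combining the constants of Theorem \ref{thm:W22-stability} with the conversion factor $(N-1)|\Ga|/(H_0\,\underline{H})$ and with Theorem \ref{thm:boundary-gradient} to control $M$ by geometric quantities. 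I do not expect a genuine obstacle here: the identity \eqref{heintze-karcher-identity} is tailor-made for the mean-convex setting, and the work reduces to bookkeeping the dependence of the final constants on the parameters $d_\Om$, $r_i$, $r_e$, $|\Om|$, $|\Ga|$, $H_0$, $\underline{H}$ and on the spectral quantity $\mu_0(\Om)$.
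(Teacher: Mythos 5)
Your proof is correct and follows essentially the same route as the paper: start from the Heintze--Karcher identity \eqref{heintze-karcher-identity}, drop the second non-negative summand to bound the $L^2$-norm of $\na^2 h$ by $\int_\Ga dS_x/H-N|\Om|$, rewrite that quantity as $\int_\Ga(H_0-H)/(H_0 H)\,dS_x$ and use $H\ge\ul{H}$ to obtain a bound of the form $\frac{N|\Om|}{\ul{H}}\nr H_0-H\nr_{\infty,\Ga}$, and finally invoke Theorem \ref{thm:W22-stability} together with the gradient bound of Theorem \ref{thm:boundary-gradient} to conclude. The only cosmetic difference is that the paper first factors out $\nr H_0-H\nr_{\infty,\Ga}$ and then bounds $\int_\Ga dS_x/H\le|\Ga|/\ul{H}$, whereas you bound $1/H\le 1/\ul{H}$ pointwise under the integral; the resulting intermediate estimates and final constants are identical.
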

\begin{proof}
We simply observe that 
$$
\int_\Ga\frac{dS_x}{H}-N\,|\Om|=\int_\Ga\left[\frac1{H}-\frac1{H_0}\right]\,dS_x\le
\frac{N |\Om|}{|\Ga|}\,\nr H_0-H\nr_{\infty,\Ga} \int_\Ga\frac{dS_x}{H},
$$
and hence from \eqref{heintze-karcher-identity} and the fact that $H \ge \ul{H}$ on $\Ga$ it follows that
$$
\frac1{N-1}\,\int_\Om |\na^2 h|^2\,dx\le \frac{N |\Om|}{\ul{H}}\,\nr H_0-H\nr_{\infty,\Ga}.
$$

The rest of the proof runs similarly to those of Theorems \ref{thm:SBT-stability} and \ref{thm:stability-hk}.

If $N=2$ or $N=3$ we obtain \eqref{SBT-stability-mean-convex-Lip23} with
\begin{equation}
\label{def-C-Lip-mc}
C= k_N \, c \, \frac{ d_\Om^\ga \, |\Om|^{\frac{1}{2} - \frac{1}{N} } }{ \ul{H}^{\frac{1}{2} } } \, \frac{1+\mu_0(\Om)}{\mu_0(\Om)} .
\end{equation}
If $N \ge 4$ we obtain \eqref{SBT-stability-mean-convex} with
\begin{equation}
\label{def-C-eps-mc}
C= k_N \, \frac{ M^{ \frac{N}{N+2} } }{\mu_0 (\Om)^{ \frac{2}{N+2} } \, |\Om|^{ \frac{1}{N} - \frac{1}{N + 2} } \, \ul{H}^{ \frac{1}{N+2} } } 
\quad\mbox{ and }
\quad
\ve= \al_N \, \frac{ \ul{H} }{|\Om|} \, \mu_0(\Om)^2 \, M^2 \, r_{i}^{N+2} .
\end{equation}
\par
As before, the presence of $M$ in $C$ and $\ve$ can be avoided by means of \eqref{NUOVA maggiorgrad} and the first inequality in \eqref{gradient-estimate}. 
\end{proof}

\begin{rem}
{\rm
(i) In Theorem \ref{thm:SBT-stability-mean-convex}, if the deviation $\nr H_0-H\nr_{\infty,\Ga}$ is small enough, $\ul{H}$ can be replaced by a fraction of $H_0$. 
Also, from the proof of that theorem, it is evident that the norm $\nr H_0-H\nr_{\infty,\Ga}$ can be replaced 
by the weaker one $\nr H_0-H\nr_{1,\Ga}$. 
\par
(ii) When $\Om$ is convex, by using Lemma \ref{lem:L2-estimate-oscillation-convex} instead of Theorem \ref{thm:W22-stability}, we can avoid the use of the spectral parameter $\mu_0 (\Om)$ in the constants of Theorems \ref{thm:SBT-stability}, \ref{thm:stability-hk}, \ref{thm:SBT-stability-mean-convex} and Corollary \ref{cor:SBT-stability}.
}
\end{rem}

\medskip

 The inequalities of Section \ref{sec:estimates} can also be used to obtain stability estimates for one of the two overdetermined boundary value problems mentioned in Section \ref{sec:SBT}.
 
\begin{thm}[Stability for an overdetermined problem]
\label{thm:OBVP-stability}
Let $\Ga$ and $\Om$ be as in Theorem \ref{thm:SBT-stability} and suppose that $H>0$ on $\Ga$.
\par
There is a point $z\in\Om$ such that
\begin{enumerate}[(i)]
\item
if $N=2$ or $N=3$,  there exixts a positive constant $C$ such that
\begin{equation}
\label{OBVT-stability-Lip23}
\rho_e-\rho_i\le C\,\nr u_\nu-1/H\nr_{1,\Ga}^{1/2} ;
\end{equation}
\item
If $N\ge 4$, there exist two positive constants $C$ and $\ve$ such that
\begin{equation}
\label{OBVT-stability-gener}
\rho_e-\rho_i\le C\,\nr u_\nu-1/H\nr_{1,\Ga}^{\frac1{N+2}},
\end{equation}
if 
$$
\nr u_\nu-1/H\nr_{1,\Ga}<\ve .
$$
\end{enumerate}
\par
The relevant constants will be given in \eqref{def-C-OBVP-Lip23} and \eqref{def-C-eps-OBVP}.
\end{thm}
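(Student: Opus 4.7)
The plan is to bring the overdetermined condition $u_\nu = 1/H$ into the framework of Theorem \ref{thm:W22-stability} via the torsion-side identity \eqref{fundamental-identity2}, in analogy with the way Theorem \ref{thm:SBT-stability} exploits \eqref{H-fundamental}. First, let $u$ be the solution of \eqref{serrin1}, pick $z \in \Om$ to be any local minimum point of $u$ (which must lie in $\Om$ since $u = 0$ on $\Ga$), and set $h = q - u$ where $q$ is the quadratic polynomial \eqref{quadratic} centred at $z$. Since $\na h(z) = 0$, Theorem \ref{thm:W22-stability} applies directly to $h$ and furnishes
$$
\rho_e - \rho_i \le C_1\,\nr \na^2 h\nr_{2,\Om} \quad\text{if } N = 2, 3,
$$
as well as
$$
\rho_e - \rho_i \le C_1\,\nr \na^2 h\nr_{2,\Om}^{2/(N+2)} \quad\text{if } N \ge 4,
$$
the latter under the smallness hypothesis \eqref{W22-smallness}, with constants prescribed by that theorem.

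The next task is to replace $\nr \na^2 h\nr_{2,\Om}$ by $\nr u_\nu - 1/H\nr_{1,\Ga}$. Combining \eqref{fundamental-identity2} with \eqref{L2-norm-hessian} and with the algebraic rearrangement $(1 - H u_\nu)\,u_\nu = H u_\nu(1/H - u_\nu)$ yields
$$
\frac{1}{N-1}\int_\Om |\na^2 h|^2\,dx = \int_\Ga H u_\nu\Bigl(\frac{1}{H} - u_\nu\Bigr)\,dS_x \le \nr H u_\nu\nr_{\infty,\Ga}\,\nr u_\nu - 1/H\nr_{1,\Ga},
$$
by H\"older's inequality. Taking square roots and substituting into the two inequalities of the preceding paragraph produces at once \eqref{OBVT-stability-Lip23} and \eqref{OBVT-stability-gener}; the smallness requirement \eqref{W22-smallness} on $\nr \na^2 h\nr_{2,\Om}$ translates, via the same estimate, into the matching threshold $\ve$ for $\nr u_\nu - 1/H\nr_{1,\Ga}$.

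To make the constants in \eqref{def-C-OBVP-Lip23} and \eqref{def-C-eps-OBVP} explicit in purely geometric and spectral quantities, it only remains to estimate the factor $\nr H u_\nu\nr_{\infty,\Ga}$. The gradient factor is controlled through Theorem \ref{thm:boundary-gradient} (or through \eqref{NUOVA maggiorgrad}), and the mean curvature factor through the pointwise bound $H \le 1/r_i$ on $\Ga$, which is a standard consequence of the uniform interior sphere condition (the second fundamental form of $\Ga$ is dominated from above by that of the touching interior sphere). The positivity hypothesis $H > 0$ enters only to ensure that $1/H$ is finite on the compact set $\Ga$, so that the deviation $\nr u_\nu - 1/H\nr_{1,\Ga}$ is meaningful; the interior-sphere bound then supplies the matching $L^\infty$ control from above that renders the constants explicit. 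I view this last bookkeeping of constants as the only mildly delicate point: the core identity-plus-H\"older argument is very short, and everything else is lifted essentially verbatim from Theorems \ref{thm:W22-stability} and \ref{thm:boundary-gradient}.
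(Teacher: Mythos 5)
Your proposal is correct and follows essentially the same route as the paper: combine \eqref{fundamental-identity2} and \eqref{L2-norm-hessian} to write $\frac{1}{N-1}\nr\na^2 h\nr_{2,\Om}^2 = \int_\Ga (1-Hu_\nu)u_\nu\,dS_x$, bound the right-hand side by $\nr Hu_\nu\nr_{\infty,\Ga}\,\nr u_\nu - 1/H\nr_{1,\Ga} \le \frac{M}{r_i}\nr u_\nu - 1/H\nr_{1,\Ga}$ using H\"older and $H\le 1/r_i$ from the interior sphere condition, and then feed this into Theorem \ref{thm:W22-stability} with $z$ a local minimum of $u$. This is exactly the paper's argument, including the remark that $M$ can be eliminated from the constants via \eqref{NUOVA maggiorgrad} and the lower bound in \eqref{gradient-estimate}.
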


\begin{proof}
We observe that 
$$
\int_\Ga (1-H\,u_\nu)\,u_\nu\,dS_x\le \int_\Ga |u_\nu-1/H| |H u_\nu|\,dS_x\le
\frac{M}{r_i}\,\nr u_\nu-1/H\nr_{1,\Ga},
$$
since $H\le 1/r_i$. 
Thus, by \eqref{fundamental-identity2} and \eqref{L2-norm-hessian} we have that
$$
\nr \na^2 h\nr_{2,\Om}^2 \le (N-1) \, \frac{M}{r_i} \, \nr u_\nu-1/H\nr_{1,\Ga} .
$$
By proceeding as before, we get \eqref{OBVT-stability-Lip23} with
\begin{equation}
\label{def-C-OBVP-Lip23}
C = k_N \, c \, \frac{ d_\Om^\ga}{|\Om|^{ \frac{1}{N} }} \, \frac{1+\mu_0(\Om)}{\mu_0(\Om)} \, \sqrt{ \frac{ M }{ r_i } } 
\end{equation}
if $N=2$ or $N=3$, and \eqref{OBVT-stability-gener} with

\begin{equation}
\label{def-C-eps-OBVP}
C = k_N \, \frac{ M^{\frac{N+1}{N+2} } }{ \mu_0(\Om)^{ \frac{2}{N+2} }|\Om|^{ \frac{1}{N} } \, r_{i}^{ \frac{1}{N+2} } }  \ \mbox{ and } \ \ve= \al_N \, \mu_0(\Om)^2 \, M \, r_{i}^{N+3} ,
\end{equation}
if $N \ge 4$.
As before, by \eqref{NUOVA maggiorgrad} and the first inequality in \eqref{gradient-estimate}, we can replace $M$ in $C$ and $\ve$ in \eqref{def-C-OBVP-Lip23} and \eqref{def-C-eps-OBVP}. 
\end{proof}

\begin{rem}
{\rm
It is clear that estimates in the spirit of Corollary \ref{cor:SBT-stability} can also be given for the situations treated in Theorems \ref{thm:stability-hk}, \ref{thm:SBT-stability-mean-convex} and \ref{thm:OBVP-stability}.
}
\end{rem}

\section*{Acknowledgements}
The authors wish to thank prof. S. Sakaguchi (Tohoku University) for bringing up to their attention reference \cite{Re} and for many fruitful discussions.
\par
Remarks \ref{rem:spectral lower bounds} (iii) and \ref{rem:SBT-stability-neumann} (iii) were suggested by the anonymous referee. The authors warmly thank him/her for the nice improvements to this paper.
\par
The paper was partially supported by a grant iFUND-Azione 2 of the Universit\`a di Firenze, under a scientific and cultural agreement with Tohoku University, and by the GNAMPA (first author) and GNSAGA (second author) of the Istituto Nazionale di Alta Matematica (INdAM).

\end{document}